
\documentclass{amsart}
\usepackage{mathrsfs}
\usepackage{cases}
\usepackage{amssymb,amscd,amsthm}
\usepackage{latexsym}
\usepackage{hyperref}
\usepackage{color}
\date{\today}

\newcommand{\Z}{{\mathbb Z}}
\newcommand{\R}{{\mathbb R}}

\newcommand{\p}{\|}
\newcommand{\rarrow}{\rightarrow}

\newtheorem{thm}{Theorem} [section]
\newtheorem{remark}[thm]{Remark}
\newtheorem{lemma}[thm]{Lemma}
\newtheorem{prop}[thm]{Proposition}

\newtheorem{defn}[thm]{Definition}


\newcommand{\be}{\begin{equation}}
\newcommand{\ee}{\end{equation}}

\newcommand{\eps}{\varepsilon}

\newcommand{\sig}{\sigma}

\newcommand{\om}{\omega}


\sloppy


\begin{document}

\title[Limit-Periodic Schr\"odinger Operators]{Limit-Periodic Schr\"odinger
Operators With Uniformly Localized Eigenfunctions}

\author[D.\ Damanik]{David Damanik}

\address{Department of Mathematics, Rice University, Houston, TX~77005, USA}

\email {\href{mailto:damanik@rice.edu}{damanik@rice.edu}}

\urladdr {\href{http://www.ruf.rice.edu/~dtd3}{www.ruf.rice.edu/$\sim$dtd3}}

\author[Z.\ Gan]{Zheng Gan}

\address{Department of Mathematics, Rice University, Houston, TX~77005, USA}

\email{\href{mailto:zheng.gan@rice.edu}{zheng.gan@rice.edu}}

\urladdr{\href{http://math.rice.edu/~zg2/}{math.rice.edu/$\sim$zg2}}

\thanks{D.\ D.\ and Z.\ G.\ were supported in part by NSF grant
DMS--0800100.}

\keywords{uniform localization, limit-periodic potentials}
\subjclass[2000]{Primary  47B36; Secondary  47B80, 81Q10}

\begin{abstract}
We exhibit limit-periodic Schr\"odinger operators that are uniformly
localized in the strongest sense possible. That is, for these
operators there are uniform exponential decay rates such that every
element of the hull has a complete set of eigenvectors that decay
exponentially off their centers of localization at least as fast as
prescribed by the uniform decay rate. Consequently, these operators
exhibit uniform dynamical localization.
\end{abstract}

\maketitle

\section{Introduction}

This paper is a part of a sequence of papers exploring the spectral properties of discrete one-dimensional limit-periodic Schr\"odinger operators; see \cite{dg1,dg2} for the earlier papers in this sequence. The overarching goal is to obtain a spectral picture that is as complete as possible, that is, we explore which spectral phenomena can occur in this class of operators and how often they do so. The present paper is devoted to cases that display a strong form of localization.

Localization is a topic that has been explored in the context of Schr\"odinger operators to a great extent. By now several mechanisms are known that lead to localization, at least in suitable energy regions. The most important one is randomness or, more generally, weak correlations. This aspect goes back to the seminal paper \cite{and} of Anderson. Another important mechanism is strong coupling and, related to this, positive Lyapunov exponents. The latter approach can be used to prove localization for strongly correlated potentials.

On the other hand, localization does not occur for periodic potentials. Limit-periodic potentials are closest to periodic potentials (at least among the stationary ones) and hence for them, one would expect either the absence of localization or a difficult localization proof in the rare cases where it holds. Indeed, most of the work on limit-periodic potentials up to this point has focused on establishing continuous spectral type. There are two notable exceptions. The first is a paper by Chulaevsky and Molchanov, \cite{mc}, which unfortunately does not contain a proof of the theorem on the presence of pure point spectrum for some continuum one-dimensional limit-periodic Schr\"odinger operators stated there. Moreover, their examples have zero Lyapunov exponent and hence are not localized in the standard sense. The other relevant paper is P\"oschel's work \cite{p}, where he proves a general theorem that provides a sufficient condition for uniform localization along with two examples showing that the general result is applicable to limit-periodic potentials. Incidentally, the Chulaevsky-Molchanov paper uses features of randomness while P\"oschel's paper uses strong coupling.

Our goal in this paper is to explore the applicability of P\"oschel's general theorem in the realm of limit-periodic potentials. The setting we will use here was initially suggested by Avila in \cite{a} and has been consistently pursued in our previous papers \cite{dg1,dg2}. The idea is to regard limit-periodic potentials as dynamically defined potentials, where the base dynamics is a minimal translation of a Cantor group and the sampling function is continuous. By separating base dynamics and sampling function in this way, it becomes easy and natural to answer questions of the type \emph{how often does phenomenon X occur}? Here we will show that P\"oschel's results can be applied to a certain type of base dynamics and suitable sampling function.

This should be contrasted with our earlier results from \cite{dg1,dg2}: For every given base dynamics, the spectrum is purely singular continuous for a dense $G_\delta$ set of continuous sampling functions and it is purely absolutely continuous for a dense set of continuous sampling functions, with both statements holding uniformly in the choice of the initial point (i.e., element of the hull). Thus, the generic spectral type is singular continuous and from this perspective, the other spectral types must be rare. It is an open problem whether pure point spectrum occurs for a dense set of continuous sampling functions.

We would also like to emphasize that P\"oschel's general theorem applies to single Schr\"odinger operators and, whenever it applies, yields one such operator with uniformly localized eigenfunctions. In the context of Schr\"odinger operators with dynamically defined potentials, however, it is more natural to study the typical behavior of a member of the family of operators that results by varying the initial point. It is known that the spectral type is independent of it almost surely with respect to any ergodic measure. Limit-periodic (or, more generally, almost periodic) potentials in turn are uniquely ergodic, that is, there is a unique choice of such a measure -- the Haar measure on the hull. In our examples, we will even go beyond that and prove uniform localization results that hold uniformly for all elements of the hull. This is a novel phenomenon. Indeed, usually localization can be proved, and in fact holds, only almost surely. For random potentials, this is obvious since there are periodic realizations of the potential. For certain almost periodic potentials, there are results to this effect due to Jitomirskaya-Simon \cite{js} and Gordon \cite{g2}. Regarding results establishing pure point spectrum for all elements of the family, we are aware of the following: For the Maryland model, see \cite{fp,fg,gfp,pg,s}, which has an unbounded potential (and hence is not almost periodic), pure point spectrum was shown for the whole family but without uniform decay of eigenfunctions. There is some unpublished work of Jitomirskaya establishing a similar result for a bounded non-almost periodic model. To the best of our knowledge, in this paper we exhibit the first almost periodic example that is uniformly localized across the hull and the spectrum.

\section{Model and Result}

We consider Schr\"odinger operators $H_\omega$ acting on $\ell^2(\Z)$ with
dynamically defined potentials $V_\omega$ given by
\begin{equation}\label{equ:oper}
[H_\omega u](n) = u(n+1) + u(n-1) + V_\omega(n) u(n),
\end{equation}
where
\begin{equation}\label{equ:pot}
V_\omega (n) = f(T^n (\omega)) , \quad \omega \in \Omega, \; n \in
\Z
\end{equation}
with a homeomorphism $T$ of a compact space $\Omega$ and a
continuous sampling function $f : \Omega \to \R$.

\begin{defn}
We say that a family $\{u_k\} \subset \ell^2(\Z)$ is uniformly localized if there exist constants $r > 0$, called the decay rate, and $c < \infty$
such that for every element $u_k$ of the family, one can find $m_k \in \Z$, called the center of localization, so that $|u_k(n)| \leq c e^{-r|n-m_k|}$ for every $n \in \Z$. We say that the operator $H_\omega$ has ULE if it has a complete set of uniformly localized eigenfunctions.\footnote{Recall that a set of vectors is called complete if their span (i.e., the set of finite linear combinations of vectors from this set) is dense.}
\end{defn}

The notion of uniformly localized eigenfunctions and related ones were introduced by del Rio et al.\ in their comprehensive study of the question ``What is localization?'' \cite{dj2,dj}. As explained there, ULE implies uniform dynamical localization, that is, if $H_\omega$ has ULE, then
\begin{equation}\label{e.udl}
\sup_{t \in \R} \left| \left\langle \delta_n , e^{-itH_\omega} \delta_m \right\rangle \right| \le C_\omega e^{-r_\omega |n-m|}
\end{equation}
with suitable constants $C_\omega , r_\omega \in (0,\infty)$. While both properties are desirable, they are extremely rare. To quote from \cite{dj2}, ``the problem is that ULE does not occur'' and ``it is an open question, in fact, whether there is \emph{any} Schr\"odinger operator with ULE.'' Del Rio et al.\ may not have been aware of P\"oschel's work \cite{p} since it predates theirs and provides some examples of Schr\"odinger operators with ULE.

The occurrence of pure point spectrum for the operators $\{H_\omega\}_{\omega \in \Omega}$ is called \textit{phase stable} if it holds for every $\omega \in \Omega$. It is an unusual phenomenon since most known models are not phase stable. It is known that uniform localization of eigenfunctions (ULE) has a close connection with phase stability of pure point spectrum; compare the following theorem.

\begin{thm} \cite[Theorem C.1]{dj} \label{thm:ule}
If $H_\om$ has ULE for $\om$ in a set of positive $\mu$-measure, then $H_\om$ has pure point spectrum
for every $\om \in \mathrm{supp}(\mu)$, where $\mathrm{supp}(\mu)$ is the complement of the largest open set $S \subset \Omega$ for which $\mu(S) = 0$.
\end{thm}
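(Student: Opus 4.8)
The plan is to upgrade ULE on a set of positive measure to a \emph{uniform} dynamical bound, one whose constants do not depend on $\omega$, then to observe that the set on which this bound holds is closed and $T$-invariant, and finally to close the argument by ergodicity of $\mu$ together with the RAGE theorem. (As throughout this circle of ideas, $\mu$ is a $T$-ergodic Borel probability measure on $\Omega$.)

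The quantitative core is this. Suppose $H_\omega$ has a complete orthonormal basis $\{u_k\}$ of eigenfunctions, $H_\omega u_k = E_k u_k$, obeying $|u_k(n)| \le c\, e^{-r|n-m_k|}$ for all $k$ and all $n$ — which is what ULE gives, after the standard reduction to an orthonormal eigenbasis. Expanding in this basis produces, for all $m, n \in \Z$, an estimate whose right-hand side does not depend on $t$:
\[
\sup_{t\in\R} \bigl| \bigl\langle \delta_n, e^{-itH_\omega}\delta_m \bigr\rangle \bigr| \le \sum_k |u_k(n)|\,|u_k(m)| \le c^2 \sum_k e^{-r(|n-m_k| + |m-m_k|)} .
\]
To turn the last sum into $C(r,c)\, e^{-\frac r2 |n-m|}$ with $C(r,c)$ depending only on $r$ and $c$, I would first bound the multiplicity of localization centers: if $m_{k_1} = \dots = m_{k_N} = j$, then each $u_{k_i}$ carries more than half of its $\ell^2$-mass in the window $\{\,|n-j|\le L\,\}$ for a suitable $L = L(r,c)$, and orthonormal vectors each (nearly) lying in a fixed space of dimension $2L+1$ number at most $2L+1$; so $\#\{k : m_k = j\} \le 2L(r,c)+1$. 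Inserting this, using $|n-m_k| + |m-m_k| \ge |n-m|$ to pull out a factor $e^{-\frac r2|n-m|}$, and summing the remaining geometric series over $j$ delivers the uniform dynamical bound. This step — extracting $\omega$-independent constants from the purely existential ULE hypothesis, and in particular the combinatorial control on the clustering of localization centers via orthogonality — is where I expect the real work to lie; the rest is soft.

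Granting this, I would next fix rationals $\rho, C > 0$ and set
\[
A_{\rho,C} = \Bigl\{ \omega \in \Omega :\ \sup_{t\in\R}\bigl| \bigl\langle \delta_n, e^{-itH_\omega}\delta_m \bigr\rangle \bigr| \le C\, e^{-\rho|n-m|}\ \text{ for all } m,n \in \Z \Bigr\} ,
\]
so that, by the previous step, $\{\omega : H_\omega \text{ has ULE}\} \subseteq \bigcup_{\rho,C\in\Q,\ \rho,C>0} A_{\rho,C}$. Two facts about these sets. First, because $\|H_\omega\|$ is bounded uniformly in $\omega$, the map $\omega \mapsto \langle\delta_n, e^{-itH_\omega}\delta_m\rangle$ is continuous — expand $e^{-itH_\omega}$ in a power series converging in norm uniformly in $\omega$, and note that each matrix element $\langle\delta_n, H_\omega^j\delta_m\rangle$ depends on only finitely many of the numbers $f(T^k\omega)$, hence continuously on $\omega$ — so $\omega \mapsto \sup_t|\langle\delta_n, e^{-itH_\omega}\delta_m\rangle|$ is lower semicontinuous and each $A_{\rho,C}$ is closed. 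Second, the shift $(Su)(n) = u(n+1)$ satisfies $S H_\omega S^{-1} = H_{T\omega}$, which gives $\langle\delta_n, e^{-itH_{T\omega}}\delta_m\rangle = \langle\delta_{n+1}, e^{-itH_\omega}\delta_{m+1}\rangle$ and so makes each $A_{\rho,C}$ invariant under $T$. Since $\mu$ assigns positive measure to $\{\omega : H_\omega \text{ has ULE}\}$, it assigns positive measure to some $A := A_{\rho,C}$.

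Finally, $A$ is closed, $T$-invariant, and has positive measure, so ergodicity of $\mu$ forces $\mu(A) = 1$; then $\Omega \setminus A$ is an open set of measure zero, hence $\mathrm{supp}(\mu) \subseteq A$. For $\omega \in A$ and any $m \in \Z$, the defining bound of $A$ gives, uniformly in $t$,
\[
\bigl\| \chi_{\{|n|>R\}}\, e^{-itH_\omega}\delta_m \bigr\|^2 = \sum_{|n|>R} \bigl| \bigl\langle \delta_n, e^{-itH_\omega}\delta_m\bigr\rangle \bigr|^2 \le C^2 \sum_{|n|>R} e^{-2\rho|n-m|} ,
\]
which tends to $0$ as $R \to \infty$. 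Hence the orbit $\{ e^{-itH_\omega}\delta_m : t \in \R\}$ is uniformly tight, so precompact in $\ell^2(\Z)$, and by the RAGE theorem $\delta_m$ lies in the pure point subspace of $H_\omega$. As $\{\delta_m\}_{m\in\Z}$ spans $\ell^2(\Z)$, $H_\omega$ has pure point spectrum for every $\omega \in A \supseteq \mathrm{supp}(\mu)$, which is the assertion.
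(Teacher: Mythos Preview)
The paper does not prove this theorem at all: it is quoted verbatim from del Rio--Jitomirskaya--Last--Simon \cite[Theorem~C.1]{dj} and used as a black box, so there is no in-paper argument to compare against. Your proposal is essentially a correct reconstruction of the original proof in \cite{dj}: pass from ULE to a uniform-in-time matrix-element bound $\sup_t|\langle\delta_n,e^{-itH_\omega}\delta_m\rangle|\le C e^{-\rho|n-m|}$ with $(C,\rho)$ depending only on $(c,r)$; observe that for each fixed $(C,\rho)$ the set of $\omega$ obeying this bound is closed (by continuity of $\omega\mapsto H_\omega$ on a compact $\Omega$) and $T$-invariant (by covariance $UH_\omega U^{-1}=H_{T\omega}$); invoke ergodicity to get full measure, hence containment of $\mathrm{supp}(\mu)$; and deduce pure point spectrum from the dynamical bound via RAGE.

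Two small remarks. First, the ``standard reduction to an orthonormal eigenbasis'' hides a point worth making explicit: with ULE as defined here the $u_k$ need not be normalized, and normalizing could in principle destroy uniformity of the constant $c$. In one dimension eigenvalues of $H_\omega$ are simple, so it suffices to discard duplicates and normalize, and one then needs a uniform lower bound on $\|u_k\|$; in \cite{dj} this is sidestepped by taking the eigenfunctions to be normalized from the outset (this is how ULE/SULE are stated there). Second, your multiplicity bound for the centers $m_k$ is morally right but the constant you quote, $2L+1$, is off: the trace inequality $\sum_i\|Pu_{k_i}\|^2\le\mathrm{tr}\,P=2L+1$ together with $\|Pu_{k_i}\|^2>1/2$ gives $N<2(2L+1)$, not $N\le 2L+1$. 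Neither point affects the validity of the scheme.
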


In what follows, we will further assume that $\Omega$ is a Cantor
group that has a minimal translation $T$. Let us recall the
necessary definitions.

\begin{defn}
We say that $\Omega$ is a Cantor group if it is a totally
disconnected compact Abelian topological group with no isolated
points. A map $T : \Omega \to \Omega$ is called a translation if $T
(\omega) = \omega \cdot \om_0$ for some $\om_0 \in \Omega$, and
moreover, it is called minimal if the orbit $\{ T^n(\omega) : n \in
\Z \}$ of every $\omega \in \Omega$ is dense in $\Omega$.
\end{defn}

Jitomirskaya pointed out in \cite{j} that Theorem~\ref{thm:ule} can
be strengthened for a minimal $T$ in the sense that if there exists
some $\om_0$ such that $H_{\om_0}$ has ULE, then $H_\om$ has pure
point spectrum for every $\om \in \mathrm{supp}(\mu)$.

As explained by Gan in \cite{g}, Cantor groups that have minimal
translations are procyclic groups. We can classify such Cantor
groups by studying their frequency integer sets. Every Cantor group
with a minimal translation has a unique maximal frequency integer
set $S = \{n_k\} \subseteq \Z_+$ with the property that
$n_{k+1}/n_k$ is prime for every $k$. We will give more details
concerning this issue in a later section.

\begin{defn}
For a Cantor group that has a minimal translation, we say that it
satisfies the condition $\mathscr{A}$ if its maximal frequency
integer set $S = \{n_k\} \subseteq \Z_+$ has the following property:
there exists some integer $m \ge 2$ such that for every $k$, we have
$n_k < n_{k+1} \leq n^m_k,$ that is, $\log n_{k+1}/\log n_k$ is
uniformly bounded.
\end{defn}

We can now state our main result:

\begin{thm} \label{thm:main}
Suppose $\Omega$ a Cantor group that admits a minimal translation
$T$ and satisfies the condition $\mathscr{A}$. Then there exists
some $f \in C(\Omega,\R) $ such that for every $\om \in \Omega$, the
Schr\"odinger operator with potential $ f(T^n(\om))$ has ULE with $\omega$-independent constants. In particular, we have uniform dynamical localization \eqref{e.udl} for every $\omega$  with $\omega$-independent constants as well.
\end{thm}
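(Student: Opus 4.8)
The plan is to apply P\"oschel's general ULE criterion from \cite{p} simultaneously to every operator in the family, using the fact that in our dynamical setting the potentials $V_\om(\cdot)=f(T^{\cdot}\om)$ differ, as $\om$ ranges over $\Omega$, only by shifts of one common hierarchical structure. First I would recall from \cite{g} that a Cantor group $\Omega$ carrying a minimal translation $T(\om)=\om\cdot\om_0$ is procyclic: it is the inverse limit of the cyclic groups $\Z/n_k\Z$, where $S=\{n_k\}$ is the maximal frequency integer set, $n_k\mid n_{k+1}$, $n_{k+1}/n_k$ is prime, and condition $\mathscr{A}$ says $\log n_{k+1}\le m\log n_k$ for a fixed integer $m\ge 2$. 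The continuous functions $g$ on $\Omega$ that factor through the quotient $\Omega\to\Z/n_k\Z$ are precisely those for which $n\mapsto g(T^n\om)$ is $n_k$-periodic for every $\om$; by Stone--Weierstrass such functions span a dense subalgebra of $C(\Omega,\R)$, and any uniformly convergent series $\sum_k g_k$ with $g_k$ factoring through $\Z/n_k\Z$ again lies in $C(\Omega,\R)$.

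Next I would record the precise hypothesis of P\"oschel's theorem as a condition $(\star)$ on a sequence of real periodic potentials $W_k$, with $W_k$ of period $q_k$ and $q_k\mid q_{k+1}$, controlling (i) the growth of the periods via $\log q_{k+1}/\log q_k$, (ii) the sizes $\|W_k\|_\infty$ relative to $q_{k-1}$, and (iii) a non-degeneracy (``general position'') requirement forcing $W_k$ to open, with an explicit lower bound, every spectral gap at scale $q_k$ not already present at scale $q_{k-1}$; under $(\star)$ the operator with potential $\sum_k W_k$ has ULE with decay rate $r$ and constant $c$ determined solely by the parameters in $(\star)$. I would then set $q_k=n_k$ and construct $f=\sum_{k\ge 1}g_k$, where each $g_k=\eps_k h_k$ factors through $\Z/n_k\Z$, the $h_k$ are fixed real trigonometric polynomials on $\Z/n_k\Z$ in general position (e.g.\ with all Fourier coefficients $\widehat{h_k}(j)$, $1\le j\le n_k-1$, nonzero), $\eps_1$ is large (this is where strong coupling enters), and $\eps_k\downarrow 0$ for $k\ge 2$ is chosen small enough --- in particular small compared to the exponentially narrow bands surviving at scale $n_{k-1}$ --- that $(\star)$(ii)--(iii) hold, while condition $\mathscr{A}$ is exactly what makes $(\star)$(i) hold. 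Since $\sum_k\eps_k\|h_k\|_\infty<\infty$, we have $f\in C(\Omega,\R)$.

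The heart of the argument is the uniformity in $\om$. Fix $\om\in\Omega$ and set $W_k^{\om}(n):=g_k(T^n\om)$, so that $V_\om=\sum_k W_k^{\om}$ with $W_k^{\om}$ of period dividing $n_k$ and $\|W_k^{\om}\|_\infty\le\eps_k\|h_k\|_\infty$. Because $T$ acts on the quotient $\Z/n_k\Z$ as translation by a generator, $W_k^{\om}$ is a translate, by an amount depending on $\om$, of one fixed period-$n_k$ potential; translating a periodic potential conjugates its monodromy matrix and therefore leaves its spectrum, bands, gaps and all the quantities entering $(\star)$ unchanged. (The relative translation between consecutive scales also varies with $\om$, but this is harmless precisely because each $h_k$ is in general position, so the required gaps open no matter how the scales are aligned.) Hence $(\star)$ holds for $\{W_k^{\om}\}_k$ with $\om$-independent parameters, and P\"oschel's theorem yields that $H_\om$ has ULE with the same $r$ and $c$ for every $\om$. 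Uniform dynamical localization \eqref{e.udl} with $\om$-independent constants then follows from ULE by \cite{dj2,dj}.

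The step I expect to be the main obstacle is the choice of the perturbation sizes $\eps_k$, because two requirements pull against each other: the $\eps_k$ must decay fast enough (relative to $n_{k-1}$ and to the band widths surviving at scale $n_{k-1}$) for P\"oschel's smallness and general-position hypotheses to hold at every scale, yet they must not decay so fast that the decay rate $r$ --- governed by a delicate balance between the growth of the periods $n_k$ and the decay of the $\eps_k$ --- degenerates to $0$ as $k\to\infty$. It is exactly here that condition $\mathscr{A}$ is indispensable: the uniform bound on $\log n_{k+1}/\log n_k$ keeps the tower of scales geometric on the logarithmic scale, and this is what permits a choice of $\{\eps_k\}$ meeting both demands, thereby forcing $r>0$ uniformly over all energies in the point spectrum and all $\om\in\Omega$. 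The second delicate point is verifying that the scale-dependent relative-phase ambiguity really causes no trouble, uniformly in $\om$, which is why the general-position choice of the $h_k$ must be made with care.
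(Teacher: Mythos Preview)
Your outline --- apply P\"oschel to get ULE for one operator, then propagate to the whole hull --- matches the paper's strategy, but both steps are misstated in ways that leave real gaps. The hypothesis of P\"oschel's theorem in \cite{p} is not a hierarchical gap-opening condition $(\star)$ on periodic building blocks $W_k$; it is the single requirement that a sequence $d\in\ell^\infty(\Z)$ be \emph{distal} for a translation-invariant Banach algebra $\mathfrak{M}$, i.e.\
\[
\inf_{i\in\Z}|d_i-d_{i+k}|\ge Q(|k|)^{-1}\qquad\text{for all }k\ne 0,
\]
for some approximation function $Q$. Moreover, the theorem is an \emph{inverse} result: given such a distal $d\in\mathfrak{M}$ and $\eps$ small, P\"oschel produces a \emph{different} sequence $\tilde d$ with $\tilde d-d\in\mathfrak{M}$ small, such that the operator with potential $\tilde d/\eps$ has ULE and eigenvalue list $\{d_i/\eps\}$. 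You do not get to write down the potential $f=\sum_k\eps_k h_k$ directly; the distal sequence becomes the spectrum, and the potential $\tilde d$ is delivered implicitly by the KAM iteration. Your ``general position'' and band/gap discussion therefore has no point of contact with P\"oschel's actual hypotheses, and the balancing act you describe for the $\eps_k$ is not the obstacle that arises.

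What the paper does instead is this. First, condition $\mathscr{A}$ is used to build an explicit limit-periodic $d$ with frequency integer set inside $S_\Omega$ that is distal with polynomial $Q(x)\sim x^{3m+1}$; condition $\mathscr{A}$ enters exactly here, since without $n_{k+1}\le n_k^m$ the separation $|d_{i_1}-d_{i_2}|$ would decay superpolynomially and $Q$ would fail to be an approximation function. Second, one takes $\mathfrak{M}=\mathcal{B}$ to be the Banach algebra of all sequences $(g(T^ne))_{n\in\Z}$ with $g\in C(\Omega,\R)$, applies P\"oschel's corollary, and obtains $\tilde d\in\mathcal{B}$ --- hence a sampling function $\tilde f\in C(\Omega,\R)$ --- for which $H_e$ has ULE. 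Third, and this is the uniformity step, P\"oschel's Theorem~A delivers not just eigenvectors but the full diagonalizing unitary $V$, with every diagonal $V_k\in\mathcal{B}$ and $\|V_k\|_\infty\le Ce^{-r|k|}$. Since each $V_k$ therefore comes from some $\tilde f_k\in C(\Omega,\R)$, the identity $HV=VD$ can be evaluated along any orbit: first along $T^{\cdot+t}e$ by shifting every diagonal, then along $T^{\cdot}\om$ for arbitrary $\om$ by continuity in $C(\Omega,\R)$. The pointwise bound $|V_k(i)|\le Ce^{-r|k|}$ survives the limit and gives ULE for $H_\om$ with the same $(C,r)$. The mechanism transporting ULE across the hull is thus not shift-invariance of bands and gaps, but the fact that the conjugating matrix itself lives, diagonal by diagonal, in $C(\Omega,\R)$.
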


We will heavily use P\"oschel's results in \cite{p}, which will be
recalled in Section~\ref{sec:poeschel}, to obtain the above theorem.
P\"oschel used an abstraction of KAM methods, with some of the basic
ideas going back to Craig \cite{cr}, R\"ussmann \cite{r} and Moser
\cite{m1}. In this approach, there is an important concept, that of
a distal sequence, which we will discuss in Section
\ref{sec:distal}. The first step in proving Theorem~\ref{thm:main}
is to construct a distal limit-periodic potential in our framework.
In Section \ref{sec:cantor}, we will recall the connection between
hulls of limit-periodic potentials and Cantor groups, which makes it
possible to embed our constructed distal limit-periodic potential
isometrically in $C(\Omega,\R)$.

\section{Limit-Periodic Potentials and Cantor Groups}
\label{sec:cantor}

This section addresses the connection between hulls of limit-periodic potentials and Cantor groups that have minimal translations, first introduced by Avila in \cite{a}, discussed to the extent needed in \cite{dg1,dg2}, and discussed in detail in \cite{g}. Since it will play an important role in this paper, we present some aspects of it here.

Let $\sig$ be the left shift operator on $\ell^\infty(\Z)$, that is, $(\sig(d))_n = d_{n+1}$ for every $d \in \ell^\infty(\Z).$ Let $\mathrm{orb}(d)
= \{ \sig^k (d) : k \in \Z\}$ and denote by $\mathrm{hull}(d)$ the closure of $\mathrm{orb}(d)$ in $\ell^\infty (\Z)$. Let us recall the following
standard definitions:

\begin{defn}
Consider a sequence $d \in \ell^\infty(\Z)$. It is called periodic if $\mathrm{orb}(d)$ is finite, it is called limit-periodic if it belongs to
the closure of the set of periodic sequences, and it is called almost periodic if $\mathrm{hull}(d)$ is compact.
\end{defn}

Every periodic sequence is limit-periodic and every limit-periodic sequence is almost periodic. For a limit-periodic $d \in \ell^{\infty}(\Z)$,  every $d^{'} \in \mathrm{hull}(d)$ is still limit-periodic. More precisely, we have the following result.

\begin{prop} \label{prop:avilahull} \cite[Lemma 2.1]{a}
Suppose $d$ is limit-periodic. Then, $\mathrm{hull}(d)$ is compact and has a unique topological group structure with identity $\sig^0(d) = d$ such that
$$
\phi : \Z \to \mathrm{hull}(d),\quad k \mapsto \sig^{k}(d)
$$
is a homomorphism. Also, the group structure is Abelian and there exist arbitrarily small compact open neighborhoods of $d$ in
$\mathrm{hull}(d)$ which are finite index subgroups.
\end{prop}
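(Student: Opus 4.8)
The plan is to realize $\mathrm{hull}(d)$ concretely as an inverse limit of finite cyclic groups. First I would check compactness by total boundedness: given $\eps>0$, choose a periodic $e$ with $\|d-e\|_\infty<\eps$ and minimal period $p$; then $\mathrm{orb}(d)\subseteq\bigcup_{i=0}^{p-1}B_{2\eps}(\sig^i e)$, so $\mathrm{orb}(d)$, and hence its closure, is totally bounded. Since $\ell^\infty(\Z)$ is complete, $\mathrm{hull}(d)$ is compact.

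Next I would build the group structure. Using that $d$ is limit-periodic, choose periodic sequences $d^{(j)}\to d$ with minimal periods $p_j$ satisfying $p_j\mid p_{j+1}$ and $p_j\to\infty$, and so close to $d$ (and to each other) that $\|d-d^{(j)}\|_\infty$ and $\|d^{(j)}-d^{(j+1)}\|_\infty$ are smaller than half the minimal distance $\rho_j$ between distinct points of the finite orbit $\mathrm{orb}(d^{(j)})$. Then for each $\om\in\mathrm{hull}(d)$ and each $j$ there is a well-defined nearest point $\pi_j(\om)\in\mathrm{orb}(d^{(j)})$, since $\dist(\om,\mathrm{orb}(d^{(j)}))\le\|d-d^{(j)}\|_\infty<\rho_j/2$; identifying $\mathrm{orb}(d^{(j)})$ with $\Z/p_j\Z$ via $\sig^k d^{(j)}\leftrightarrow k\bmod p_j$, the maps $\pi_j$ are continuous (indeed locally constant), satisfy $\pi_j(\sig^k d)=k\bmod p_j$, and are compatible with the reductions $\Z/p_{j+1}\Z\to\Z/p_j\Z$. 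Hence $\Pi:=(\pi_j)_j$ is a continuous map from $\mathrm{hull}(d)$ to $G:=\varprojlim_j\Z/p_j\Z$; it is injective because $\Pi(\om)=\Pi(\om')$ forces $\|\om-\om'\|_\infty\le 2\|d-d^{(j)}\|_\infty\to 0$, and its image is compact and contains the dense image of $\Z$, so $\Pi$ is onto, hence a homeomorphism. Pulling back the (Abelian, compact) group law of $G$ makes $\mathrm{hull}(d)$ a compact Abelian topological group in which $\phi(k)=\sig^k d$ corresponds to the image of $k\in\Z$; in particular $\phi$ is a homomorphism with identity $\phi(0)=\sig^0(d)=d$.

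The step I expect to be the main obstacle is the choice of the approximants $d^{(j)}$ with all of these properties simultaneously --- in particular, that at every scale one can find a periodic approximant whose minimal period is arbitrarily large and which is moreover \emph{isolated among its own translates} on a commensurate scale, so that the nearest-point projections $\pi_j$ are well defined and compatible from one level to the next. This is precisely where limit-periodicity (a \emph{uniform} limit of periodic sequences), rather than mere almost periodicity, enters. I would establish it by induction on $j$: assuming $d^{(j)}$ is in hand, pick a periodic $e$ with $\|d-e\|_\infty$ far smaller than both $\rho_j$ and any prescribed tolerance, put $p_{j+1}=\mathrm{lcm}(p_j,\mathrm{per}(e))$, and, if necessary, replace $e$ by a periodic sequence of larger minimal period but still arbitrarily close to $d$ so as to force $\|d-d^{(j+1)}\|_\infty<\rho_{j+1}/2$ together with $p_{j+1}>p_j$; keeping track of the minimal periods here is the delicate part. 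Equivalently, what must be shown is that closeness of two translates $\sig^a d,\sig^b d$ in $\mathrm{hull}(d)$ forces $a\equiv b$ modulo $p_j$ for a cofinal family of $j$, which is exactly the uniform-continuity input needed to extend the obvious operation $\sig^a d\cdot\sig^b d:=\sig^{a+b}d$ from $\mathrm{orb}(d)$ to $\mathrm{hull}(d)$.

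Finally, uniqueness and the subgroup statement are routine once the group is in place. Any topological group law on $\mathrm{hull}(d)$ with identity $d$ for which $\phi$ is a homomorphism must satisfy $\sig^a d\cdot\sig^b d=\phi(a)\phi(b)=\phi(a+b)=\sig^{a+b}d$, hence agrees with the law above on the dense set $\mathrm{orb}(d)\times\mathrm{orb}(d)$ and therefore, by continuity, everywhere; the inverse and identity are forced in the same way. And the sets $G_j:=\Pi^{-1}\bigl(\ker(G\to\Z/p_j\Z)\bigr)$ are subgroups of $\mathrm{hull}(d)$ containing $d$, each closed and open (the latter because $\Z/p_j\Z$ is discrete and $\Pi$ is continuous), hence compact, and of index $p_j$; since $p_j\to\infty$ and the $G_j$ form a neighborhood basis of $d$, they provide arbitrarily small compact open finite-index subgroups, which completes the proof.
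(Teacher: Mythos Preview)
The paper does not give its own proof of this proposition; it is cited without argument from Avila \cite[Lemma~2.1]{a}. So there is no in-paper proof to compare against, and I can only comment on your sketch on its own terms.

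Your compactness argument, the uniqueness deduction, and the extraction of small open finite-index subgroups from the inverse-limit description are all correct. The genuine gap is exactly where you locate it: arranging periodic approximants $d^{(j)}$ with $\|d-d^{(j)}\|_\infty<\rho_j/2$. Your proposed remedy --- ``replace $e$ by a periodic sequence of larger minimal period but still arbitrarily close to $d$'' --- goes in the wrong direction: enlarging the minimal period means there are more nontrivial translates to separate, so the minimal gap $\rho_{j+1}$ can only shrink, and the inequality you need becomes harder to satisfy, not easier. Without that separation bound the nearest-point map $\pi_j$ is not single-valued and the map $\Pi$ is undefined. The obstacle disappears if you build the group law first by a direct uniform-continuity extension: from
\[
\|\sigma^{a+b}d-\sigma^{a'+b'}d\|_\infty\le\|\sigma^{a}d-\sigma^{a'}d\|_\infty+\|\sigma^{b}d-\sigma^{b'}d\|_\infty
\]
the operation $(\sigma^a d,\sigma^b d)\mapsto\sigma^{a+b}d$ extends continuously from $\mathrm{orb}(d)\times\mathrm{orb}(d)$ to $\mathrm{hull}(d)\times\mathrm{hull}(d)$ (this step uses only almost periodicity, not the stronger congruence statement you formulate). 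Then, for any $p$-periodic $e$ with $\|d-e\|_\infty<\eps$, one has $\|\sigma^{kp}d-d\|_\infty<2\eps$ for every $k\in\Z$, so the closed subgroup $\overline{\{\sigma^{kp}d:k\in\Z\}}$ lies in the $2\eps$-ball about $d$; its index divides $p$ since the dense image of $\Z$ in the quotient factors through $\Z/p\Z$. This produces the arbitrarily small compact open finite-index subgroups with no separation hypothesis at all.
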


The last statement in the above proposition tells us that
$\mathrm{hull}(d)$ is totally disconnected. So if $d$ is not
periodic, $\mathrm{hull}(d)$ is a Cantor group. The translation $T$
defined initially on $\mathrm{orb}(d)$ by $T(\sigma^{i}(d)) =
\sigma^{i+1}(d)$ and extended to $\mathrm{hull}(d)$ by continuity is
minimal. There may be other minimal translations in
$\mathrm{hull}(d)$.

\begin{remark}
Note that not every Cantor group admits a minimal translation. For example, $$\Omega = \prod^{\infty}_{j=0} \Z_2,$$ where $\Z_2$ is a cyclic $2$-group, is a Cantor group with the product topology, but it has no minimal translations.
\end{remark}

\begin{prop} \label{prop:hull} \cite[Lemma 2.2.]{a}
Given a Cantor group $\Omega$, a minimal translation $T$, and $f \in C(\Omega,\R)$, define
$F : \Omega \rarrow \ell^\infty(\Z)$, $F(\om) = (f(T^n(\om)))_{n \in \Z}$.
Then we have that $F(\om)$ is limit-periodic and $F (\Omega)
= \mathrm{hull}(F(\om))$ for every  $\om \in \Omega$.
\end{prop}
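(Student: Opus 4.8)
The plan is to use that $F$ conjugates the translation $T$ to the shift $\sigma$, together with minimality of $T$, compactness of $\Omega$, and the abelian group structure of $\Omega$. First I would record the intertwining relation: since $[F(T\omega)]_n = f(T^{n+1}\omega) = [\sigma F(\omega)]_n$, we have $F \circ T = \sigma \circ F$, hence $F(T^k\omega) = \sigma^k F(\omega)$ for all $k \in \Z$, and $F$ maps the $T$-orbit $\{T^k\omega : k \in \Z\}$ onto the $\sigma$-orbit $\mathrm{orb}(F(\omega))$. The one genuinely structural point is the continuity of $F : \Omega \to \ell^\infty(\Z)$. Writing $T(\omega) = \omega \cdot \omega_0$ and using that $\Omega$ is abelian, one has $(T^n\omega) \cdot (T^n\omega')^{-1} = \omega \cdot \omega_0^n \cdot \omega_0^{-n} \cdot (\omega')^{-1} = \omega \cdot (\omega')^{-1}$ for every $n$; since a continuous function on the compact group $\Omega$ is uniformly continuous, for each $\varepsilon > 0$ there is a neighborhood $U$ of the identity with $|f(a) - f(b)| < \varepsilon$ whenever $a \cdot b^{-1} \in U$, and therefore $\omega \cdot (\omega')^{-1} \in U$ forces $\|F(\omega) - F(\omega')\|_\infty = \sup_n |f(T^n\omega) - f(T^n\omega')| \le \varepsilon$.

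With continuity in hand, the hull identity is formal. Since $\Omega$ is compact and $F$ is continuous, $F(\Omega)$ is compact, hence closed in $\ell^\infty(\Z)$. By minimality of $T$, the set $\{T^k\omega : k \in \Z\}$ is dense in $\Omega$. On the one hand $\mathrm{orb}(F(\omega)) = F(\{T^k\omega : k \in \Z\}) \subseteq F(\Omega)$, and closedness of $F(\Omega)$ gives $\mathrm{hull}(F(\omega)) = \overline{\mathrm{orb}(F(\omega))} \subseteq F(\Omega)$. On the other hand, $F(\Omega) = F\big(\overline{\{T^k\omega : k \in \Z\}}\big) \subseteq \overline{F(\{T^k\omega : k \in \Z\})} = \overline{\mathrm{orb}(F(\omega))} = \mathrm{hull}(F(\omega))$. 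Combining the two inclusions yields $F(\Omega) = \mathrm{hull}(F(\omega))$ for every $\omega \in \Omega$.

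Finally, to see that $F(\omega)$ is limit-periodic, I would approximate $f$ by locally constant functions. A compact totally disconnected group has a neighborhood base at the identity consisting of open subgroups, each automatically of finite index (van Dantzig's theorem together with compactness of $\Omega$); the real-valued functions that are constant on the cosets of such a subgroup form a subalgebra of $C(\Omega,\R)$ which contains the constants and separates points, so by Stone--Weierstrass it is dense. Given $\varepsilon > 0$, choose such a $g$ with $\|f - g\|_\infty < \varepsilon$, where $g$ is constant on the cosets of an open subgroup $K$ with $N := [\Omega : K] < \infty$. Then $g(T^n\omega) = g(\omega \cdot \omega_0^n)$ depends only on the coset $\omega \omega_0^n K$, and since $\omega_0 K$ has order dividing $N$ in the finite group $\Omega/K$ (so $\omega_0^N \in K$), the sequence $(g(T^n\omega))_{n \in \Z}$ is periodic of period dividing $N$. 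As $\|F(\omega) - (g(T^n\omega))_{n \in \Z}\|_\infty \le \|f - g\|_\infty < \varepsilon$ and $\varepsilon$ was arbitrary, $F(\omega)$ lies in the closure of the periodic sequences, i.e.\ it is limit-periodic.

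The proposition is soft, and the only step where care is really needed is the continuity of $F$ into $\ell^\infty(\Z)$ in the first paragraph: this is exactly where one uses that $T$ is a translation of an \emph{abelian} group rather than merely a minimal homeomorphism, since it is the identity $(T^n\omega)(T^n\omega')^{-1} = \omega(\omega')^{-1}$ that upgrades pointwise continuity of $f$ to uniform-in-$n$ control, and hence to $\ell^\infty$-continuity of $F$. For a general minimal dynamical system this fails, and correspondingly $F(\omega)$ need not be almost periodic. Everything else is a routine combination of compactness, density of the orbit, and Stone--Weierstrass.
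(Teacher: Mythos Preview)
Your argument is correct. The paper itself does not supply a proof of this proposition; it is quoted from Avila's paper and used as a black box, so there is no ``paper's own proof'' to compare against. The ingredients you use---the intertwining $F\circ T=\sigma\circ F$, uniform continuity of $f$ on the compact group to get $\ell^\infty$-continuity of $F$, compactness plus minimality for the hull identity, and density of locally constant (coset-constant) functions for limit-periodicity---are the standard ones, and indeed the paper later invokes exactly the coset-averaging construction to assert that periodic sampling functions are dense in $C(\Omega,\R)$.

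One small remark on your closing commentary: abelianness is not actually where the cancellation happens. With $T\omega=\omega\cdot\omega_0$ one has $(T^n\omega)(T^n\omega')^{-1}=\omega\,\omega_0^{\,n}\,\omega_0^{-n}(\omega')^{-1}=\omega(\omega')^{-1}$ in any group, so the $\ell^\infty$-continuity of $F$ uses only that $T$ is a translation (equivalently, that the family $\{T^n\}$ is equicontinuous), not that $\Omega$ is abelian. This does not affect the validity of your proof.
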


The following lemma will play an important role below.

\begin{lemma} \label{lem:hulliso} \cite[Lemma 4.1]{g}
There exists some $f \in C(\Omega,\R)$ such that $\mathrm{hull}(F(e)) \cong \Omega$
{\rm (}where we denote, as above, $F(e) = (f(T^n(e)))_{n \in \Z}${\rm )}.
\end{lemma}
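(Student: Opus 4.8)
The plan is to reduce the statement to the injectivity of a single orbit map and then build $f$ by Fourier analysis on $\Omega$. Given $f \in C(\Omega,\R)$, set $F(\om) = (f(T^n(\om)))_{n\in\Z}$ as in Proposition~\ref{prop:hull}; then $F$ intertwines $T$ with the shift, $F(T(\om)) = \sig(F(\om))$, and by that proposition $F$ is a continuous surjection onto $\mathrm{hull}(F(e))$. If moreover $F$ is injective, then $F$ is a homeomorphism (continuous bijection between compact Hausdorff spaces). Taking $e$ to be the identity of the group $\Omega$, the orbit $\{T^n(e)\}_{n\in\Z} = \{\om_0^n\}_{n\in\Z}$ is a dense cyclic subgroup of $\Omega$ (minimality), and $F$ carries it bijectively onto $\{\sig^n(F(e))\}_{n\in\Z}$, which by Proposition~\ref{prop:avilahull} is a dense cyclic subgroup of $\mathrm{hull}(F(e))$ with identity $F(e)$. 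Since $F(\om_0^{i+j}) = \sig^{i+j}(F(e)) = \sig^i(F(e))\cdot\sig^j(F(e)) = F(\om_0^i)\cdot F(\om_0^j)$, the map $F$ is a group homomorphism on this dense subgroup, hence on all of $\Omega$ by continuity of $F$ and of the group operations; being also a homeomorphism, it is an isomorphism of topological groups, so $\mathrm{hull}(F(e)) \cong \Omega$. (Injectivity of $F$ forces $F(e)$ to be non-periodic, so the hull is indeed a Cantor group.)

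It remains to construct $f$ with $F$ injective. Since $\Omega$ is a procyclic group (as recalled from \cite{g}), its Pontryagin dual $\hat\Omega$ is a countable torsion group; in particular characters separate points of $\Omega$, every character has finite order so that $\chi(\om_0)$ is a root of unity, and the only character trivial on $\om_0$ is the trivial one (because $\om_0$ generates a dense subgroup and characters are continuous). From the last fact, $\chi \mapsto \chi(\om_0)$ is injective on $\hat\Omega$: if $\chi_1(\om_0) = \chi_2(\om_0)$, then $\chi_1\chi_2^{-1}$ is trivial on $\om_0$, hence trivial. Now choose weights $c : \hat\Omega \setminus \{1\} \to (0,\infty)$ with $c(\chi) = c(\overline{\chi})$ and $\sum_{\chi\neq1} c(\chi) < \infty$, and put $f = \sum_{\chi\neq1} c(\chi)\,\chi$. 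The series converges uniformly since $|\chi|\equiv1$, so $f \in C(\Omega)$, and the symmetry of the weights gives $\overline{f} = f$ after reindexing $\chi \mapsto \overline{\chi}$, so $f$ is real-valued.

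To check injectivity, suppose $F(\om) = F(\om')$, i.e.\ $f(T^n(\om)) = f(T^n(\om'))$ for all $n$. Fix $\psi \in \hat\Omega\setminus\{1\}$ and set $\lambda_\chi = \chi(\om_0)$. Using $\chi(T^n(\om)) = \chi(\om)\lambda_\chi^{\,n}$ one gets
\[
\frac1N \sum_{n=0}^{N-1} f(T^n(\om))\,\overline{\lambda_\psi}^{\,n} = \sum_{\chi\neq1} c(\chi)\,\chi(\om)\,\Bigl(\frac1N\sum_{n=0}^{N-1}(\lambda_\chi\overline{\lambda_\psi})^n\Bigr).
\]
Each inner average has modulus at most $1$, and since the $\lambda_\chi$ are pairwise distinct it tends to $1$ when $\chi=\psi$ and to $0$ otherwise; by dominated convergence (summable dominating function $\chi\mapsto c(\chi)$) the left-hand side tends to $c(\psi)\psi(\om)$ as $N\to\infty$. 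The same computation applies to $\om'$, so $c(\psi)\psi(\om) = c(\psi)\psi(\om')$, and since $c(\psi) > 0$ we obtain $\psi(\om) = \psi(\om')$ for every $\psi \in \hat\Omega$. As characters separate points, $\om = \om'$, so $F$ is injective and the lemma follows.

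The one place where something nontrivial is genuinely used is the pairwise distinctness of the numbers $\lambda_\chi = \chi(\om_0)$, which is exactly where minimality of $T$ enters and which is what makes it possible to read off the individual character values $\chi(\om)$ from the single scalar sequence $(f(T^n(\om)))_n$; the remaining ingredients — uniform convergence of the defining series, reality of $f$, the interchange of limit and sum, and upgrading the topological conjugacy to a group isomorphism — are routine. A more combinatorial alternative would be to use the presentation of $\Omega$ as an inverse limit of cyclic groups $\Z/n_k\Z$ and assemble $f$ from periodic functions adapted to the scales $n_k$, but the dual-group construction above seems the most transparent.
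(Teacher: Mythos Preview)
The paper does not supply its own proof of this lemma; it is quoted from \cite{g}. Your argument is correct and complete: the reduction to injectivity of $F$ together with the homomorphism-on-a-dense-subgroup argument is sound (the group operation on $\mathrm{hull}(F(e))$ furnished by Proposition~\ref{prop:avilahull} is continuous, so density suffices), and the character-theoretic construction of $f$ works because minimality makes $\chi\mapsto\chi(\omega_0)$ injective on $\hat\Omega$, which is exactly what your Weyl-averaging step needs to isolate $c(\psi)\psi(\omega)$.

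Your route is genuinely different in spirit from the framework surrounding the cited result. The paper's entire Section~\ref{sec:cantor} is organized around the combinatorial description of $\Omega$ as an inverse limit $\varprojlim\Z/n_k\Z$ and the associated frequency integer sets; the construction in \cite{g} that is being invoked lives in that language, assembling $f$ from periodic pieces adapted to the scales $n_k$ (the alternative you yourself mention at the end). What your dual-group construction buys is a short, structure-free proof valid for any totally disconnected compact monothetic group, with no bookkeeping of periods. What the combinatorial construction buys is an explicit $f$ tied to the hierarchy $\{n_k\}$, which meshes directly with how the paper later builds distal potentials in Lemma~\ref{lem:subdistal} and classifies hulls via Proposition~\ref{thm:classification}. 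One small point worth making explicit in your write-up: Proposition~\ref{prop:hull} asserts $F(\Omega)=\mathrm{hull}(F(e))$ but not continuity of $F$; the latter follows from uniform continuity of $f$ on the compact $\Omega$ together with a translation-invariant metric, and is worth a line.
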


Moreover, we have

\begin{prop}  \label{prop:limitform} \cite[Corollary A.1.5]{as}
If $d \in \ell^{\infty}(\Z)$ is limit-periodic, then there exists a set $S_d = \{n_j\}_{j \ge 1} \subset \Z_+$ with $n_j | n_{j+1}$ for every $j$ such that
\begin{equation}\label{equ:limitform}
d(k) = \sum^{\infty}_ {j = 1} p_j
(k),
\end{equation}
with $n_j$-periodic $p_j \in \ell^{\infty}(\Z)$. This convergence is uniform.
\end{prop}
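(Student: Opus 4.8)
The plan is to unwind the definition of limit-periodicity and manufacture the decomposition \eqref{equ:limitform} by telescoping a well-chosen sequence of periodic approximants, using least common multiples to force the divisibility condition $n_j \mid n_{j+1}$.

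First I would use that $d$ lies in the $\ell^\infty(\Z)$-closure of the periodic sequences to pick periodic $q_m \in \ell^\infty(\Z)$ with $\|q_m - d\|_\infty \to 0$ and, after passing to a subsequence, arrange $\|q_m - d\|_\infty < 2^{-m}$ for every $m$. Letting $\ell_m$ be a period of $q_m$, I would set $N_m = \mathrm{lcm}(\ell_1,\ldots,\ell_m)$, so that $N_m \mid N_{m+1}$ and, since $\ell_m \mid N_m$, the sequence $q_m$ is $N_m$-periodic. Then I would put $p_1 = q_1$ and $p_j = q_j - q_{j-1}$ for $j \ge 2$; because $N_{j-1} \mid N_j$, the sequence $q_{j-1}$ is also $N_j$-periodic, hence each $p_j$ is $N_j$-periodic. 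The partial sums telescope, $\sum_{j=1}^m p_j = q_m$, while $\|p_j\|_\infty \le \|q_j - d\|_\infty + \|d - q_{j-1}\|_\infty < 3\cdot 2^{-j}$ for $j \ge 2$, so $\sum_j \|p_j\|_\infty < \infty$; by completeness of $\ell^\infty(\Z)$ the series $\sum_j p_j$ converges uniformly, and its sum equals $\lim_m q_m = d$. Taking $n_j = N_j$ then yields \eqref{equ:limitform} together with the required divisibility.

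The only point requiring a little care is that when $d$ happens to be genuinely periodic the numbers $N_m$ may stabilize, so $\{N_j\}_{j \ge 1}$ need not be an infinite set as the indexing suggests; I would dispose of this in one line by padding, e.g. once $N_m \equiv N$ continue with $n_j = N \cdot j!$ and $p_j = 0$. Apart from this cosmetic issue I do not anticipate a genuine obstacle: the substance is the $\mathrm{lcm}$ trick that converts an arbitrary list of periods into a divisibility chain, and everything else is routine Banach-space bookkeeping.
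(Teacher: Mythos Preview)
Your argument is correct. Note, however, that the paper does not supply its own proof of this proposition: it is quoted verbatim from Avron--Simon \cite[Corollary~A.1.5]{as}, so there is no in-paper proof to compare against. What you have written is precisely the standard argument behind that corollary --- choose periodic approximants $q_m$ with $\|q_m-d\|_\infty<2^{-m}$, replace their periods by $N_m=\mathrm{lcm}(\ell_1,\dots,\ell_m)$ to force the divisibility chain, and telescope $p_j=q_j-q_{j-1}$ --- so your approach coincides with the one the paper is invoking by citation.

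One remark on your closing caveat: the proposition as stated only requires $n_j\mid n_{j+1}$, not that the $n_j$ be strictly increasing or distinct, so in the periodic case you may simply keep $n_j=N$ eventually with $p_j=0$; your $N\cdot j!$ padding also works but is not needed.
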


A set $S_d = \{n_j\}$ associated with $d$ as in this proposition will be called a \textit{frequency integer set} of $d$. Since one of the defining properties is that $n_j$ divides $n_{j+1}$ for every $j$, the elements of frequency integer sets are always listed in increasing order.

\begin{prop} \cite[Theorem 2.1]{g}\label{thm:classification}
Given limit-periodic potentials $d$ and $\tilde{d} \in \ell^{\infty}(\Z)$ with infinite frequency integer sets $S_d$ and
$S_{\tilde{d}}$ respectively, $\mathrm{hull}(d) \cong \mathrm{hull}(\tilde{d})$ if and only if for any $n_i \in S_d$ there
exists $m_j \in S_{\tilde{d}}$ such that $n_i | m_j$ and vice versa.
\end{prop}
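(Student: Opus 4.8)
The plan is to recast the statement as the classification of procyclic compact groups by their Steinitz (supernatural) numbers. By Proposition~\ref{prop:avilahull}, $\mathrm{hull}(d)$ is a compact abelian group in which the image of $\phi\colon\Z\to\mathrm{hull}(d)$, $k\mapsto\sigma^k(d)$, is dense; hence $\mathrm{hull}(d)$ is procyclic, and the standard structure theory of such groups identifies it, up to topological isomorphism, with the inverse limit $\varprojlim_{U}\mathrm{hull}(d)/U$ taken over its open finite-index subgroups $U$, each quotient being finite cyclic. Consequently $\mathrm{hull}(d)$ is determined up to topological isomorphism by its Steinitz number
\[
\mathfrak{s}(d)\;=\;\operatorname{lcm}\bigl\{\,|\mathrm{hull}(d)/U| : U\le\mathrm{hull}(d)\text{ open of finite index}\,\bigr\}\;=\;\prod_{p}p^{\,e_p(d)},\qquad e_p(d)\in\{0,1,2,\dots,\infty\},
\]
and two procyclic groups are topologically isomorphic iff their Steinitz numbers coincide. (Equivalently, one may argue via Pontryagin duality: $\widehat{\mathrm{hull}(d)}$ is the subgroup of $\Q/\Z$ generated by the Fourier--Bohr frequencies of $d$, and subgroups of $\Q/\Z$ are themselves classified by Steinitz numbers.) So the proposition reduces to two assertions: (i) $\mathfrak{s}(d)$ can be read off from a frequency integer set $S_d=\{n_j\}$, namely $\mathfrak{s}(d)=\operatorname{lcm}_j n_j=\prod_p p^{\sup_j v_p(n_j)}$ with $v_p$ the $p$-adic valuation; and (ii) the mutual-divisibility condition in the statement is equivalent to $\mathfrak{s}(d)=\mathfrak{s}(\tilde d)$.

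For (i), write $d=\sum_j p_j$ with $p_j$ being $n_j$-periodic and, by uniform convergence, $\|p_j\|_\infty\to0$; then $(a_j)_j\mapsto\sum_j\sigma^{a_j}(p_j)$ is a well-defined continuous homomorphism from $\varprojlim_j\Z/n_j\Z$ onto $\mathrm{hull}(d)$ sending $(k\bmod n_j)_j$ to $\sigma^k(d)$. Working with the maximal frequency integer set of $\mathrm{hull}(d)$ (the one with $n_{j+1}/n_j$ prime, which exists once $d$ is non-periodic and for which $\operatorname{lcm}_j n_j$ equals the Steinitz number by construction), this map is an isomorphism, giving (i); for a general, efficiently chosen frequency integer set one checks instead that the $j$-th partial sum $p_1+\dots+p_j$ has minimal period $n_j$, so again $\mathrm{hull}(d)=\varprojlim_j\Z/n_j\Z$ and hence $\mathfrak{s}(d)=\operatorname{lcm}_j n_j$.

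Assertion (ii) is elementary arithmetic with supernatural numbers. If the divisibility condition holds, then for each prime $p$ and each $i$ we may pick $j$ with $n_i\mid m_j$, so $v_p(n_i)\le v_p(m_j)\le\sup_\ell v_p(m_\ell)$; taking the supremum over $i$ gives $\sup_i v_p(n_i)\le\sup_\ell v_p(m_\ell)$, the symmetric statement gives the reverse inequality, and hence $e_p(d)=e_p(\tilde d)$ for every $p$, i.e.\ $\mathfrak{s}(d)=\mathfrak{s}(\tilde d)$. Conversely, assume $\mathfrak{s}(d)=\mathfrak{s}(\tilde d)$ and fix $n_i\in S_d$; let $p_1,\dots,p_r$ be its prime divisors with exponents $a_\ell=v_{p_\ell}(n_i)$. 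Since $a_\ell\le\sup_i v_{p_\ell}(n_i)=e_{p_\ell}(d)=e_{p_\ell}(\tilde d)=\sup_j v_{p_\ell}(m_j)$, choose $j_\ell$ with $v_{p_\ell}(m_{j_\ell})\ge a_\ell$ and set $j=\max_\ell j_\ell$; as $\{m_j\}$ is a divisibility chain, $v_{p_\ell}(m_j)\ge v_{p_\ell}(m_{j_\ell})\ge a_\ell$ for each $\ell$, while $v_p(n_i)=0$ for $p\notin\{p_1,\dots,p_r\}$, so $n_i\mid m_j$; the ``vice versa'' half is symmetric. Chaining (i), (ii) and the group classification then yields $\mathrm{hull}(d)\cong\mathrm{hull}(\tilde d)\iff\mathfrak{s}(d)=\mathfrak{s}(\tilde d)\iff$ the mutual-divisibility condition.

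The main obstacle is step (i): one must ensure that the continuous surjection $\varprojlim_j\Z/n_j\Z\twoheadrightarrow\mathrm{hull}(d)$ is injective, i.e.\ that the frequency integer set faithfully records the Steinitz number of the hull rather than overcounting it. This is where the precise form of the frequency integer set from Proposition~\ref{prop:limitform} enters (or where one reduces to the maximal frequency integer set, for which injectivity is forced for structural reasons); once (i) is in place, step (ii) and the classification of procyclic groups are routine.
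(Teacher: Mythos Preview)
The paper does not prove this proposition; it is quoted from \cite[Theorem 2.1]{g} without argument, so there is no in-paper proof to compare against. Your route via the Steinitz-number classification of procyclic groups is the natural one and is essentially what that reference does.

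Your step (ii) is clean and correct. The difficulty you flag in step (i) is real, and it is worth stressing that it is not merely a gap in \emph{your} argument but an issue with the proposition as literally stated for \emph{arbitrary} frequency integer sets: such a set can overcount. If $d=\sum_j p_j$ with each $p_j$ of period $2^j$, then $\{3\cdot 2^j\}_{j\ge 1}$ is also a frequency integer set of $d$ (every $2^j$-periodic sequence is $3\cdot 2^j$-periodic), yet $\mathrm{hull}(d)\cong\Z_2$ carries no factor of $3$. Pairing this $S_d$ with a $\tilde d$ whose hull is $\Z_2\times\Z/3\Z$ and whose frequency set is also $\{3\cdot 2^j\}$ satisfies the mutual-divisibility condition while the hulls are not isomorphic; conversely, two sequences with hull $\Z_2$ equipped with the frequency sets $\{2^j\}$ and $\{3\cdot 2^j\}$ have isomorphic hulls but fail the divisibility condition. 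So the surjection $\varprojlim_j\Z/n_j\Z\twoheadrightarrow\mathrm{hull}(d)$ you write down need not be injective in general, and both implications can fail.

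Your proposed fix---pass to the maximal frequency integer set, or equivalently assume the $j$-th partial sum has minimal period $n_j$---is exactly what is needed: the maximal frequency integer set does record $\mathfrak{s}(d)$ faithfully (this is how it is built in \cite{g}), and then the procyclic classification finishes the proof. In the paper's downstream uses (Lemma~\ref{lem:subfrequency} and the proof of Lemma~\ref{lem:subdistal}) the frequency sets in play are of this efficient type, so nothing goes wrong in the applications.
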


Since the expansion \eqref{equ:limitform} is not unique, one may
have many frequency integer sets for $d$. A union of frequency
integer sets is still a frequency integer set of $d$. There exists a
unique maximal frequency integer set $M_d$ in the sense that every
frequency integer set $S_d$ is contained in $M_d$, and the maximal
frequency integer set is of the form $M_d = \{n_j\}$, where
$n_{j+1}/n_{j}$ are all primes. (We refer the reader to
\cite[Appendix 1]{as} and \cite[Section 2]{g} for more details about
the frequency integer sets of limit-periodic potentials.)

By Lemma \ref{lem:hulliso}, we know that for every Cantor group $\Omega$ that has minimal translations, there exists a limit-periodic
potential $d$ such that $\mathrm{hull}(d) \cong \Omega$. Thus, we can also endow such an $\Omega$ with a maximal frequency integer set $S_\Omega$.
Moreover, we have

\begin{lemma} \label{lem:subfrequency}\cite[Theorem 2.1]{g}
Given two Cantor groups $\Omega$ and $\tilde{\Omega}$ that have minimal translations, $\Omega \cong \tilde{\Omega}$ if and only if
they have the same maximal frequency integer set.
\end{lemma}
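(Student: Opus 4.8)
The plan is to reduce the statement to Proposition~\ref{thm:classification} by realizing both Cantor groups as hulls of limit-periodic potentials. First I would invoke Lemma~\ref{lem:hulliso} to pick limit-periodic $d,\tilde d\in\ell^\infty(\Z)$ with $\mathrm{hull}(d)\cong\Omega$ and $\mathrm{hull}(\tilde d)\cong\tilde\Omega$. Since a Cantor group has no isolated points it is infinite, so neither $d$ nor $\tilde d$ is periodic; hence their frequency integer sets are infinite and Proposition~\ref{thm:classification} applies to them. By the definition of the maximal frequency integer set attached to a Cantor group we have $S_\Omega=M_d$ and $S_{\tilde\Omega}=M_{\tilde d}$, where $M_d,M_{\tilde d}$ are the maximal frequency integer sets of $d,\tilde d$.

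For the ``if'' direction, assume $S_\Omega=S_{\tilde\Omega}$, i.e.\ $M_d=M_{\tilde d}$. Then the two-sided divisibility condition of Proposition~\ref{thm:classification} (every element of $M_d$ divides an element of $M_{\tilde d}$ and vice versa) holds trivially, so $\mathrm{hull}(d)\cong\mathrm{hull}(\tilde d)$, that is, $\Omega\cong\tilde\Omega$.

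For the ``only if'' direction, assume $\Omega\cong\tilde\Omega$, so $\mathrm{hull}(d)\cong\mathrm{hull}(\tilde d)$. Proposition~\ref{thm:classification} now yields the two-sided divisibility relation between $M_d$ and $M_{\tilde d}$: each $n\in M_d$ divides some $m\in M_{\tilde d}$ and each $m\in M_{\tilde d}$ divides some $n\in M_d$. The remaining and central point is to promote this ``mutual cofinality'' to the equality $M_d=M_{\tilde d}$. Here I would exploit maximality: $M_d$ is the union of all frequency integer sets of $d$, so it records precisely which integers can occur as periods in an admissible expansion of $d$. The two-sided divisibility relation says exactly that $M_d$ and $M_{\tilde d}$ have the same divisor-closure $D=\{\,n\in\Z_+ : n\mid n_j \text{ for some } n_j\in M_d\,\}$, equivalently the same associated supernatural number $N=\prod_p p^{\alpha_p}$ with $\alpha_p=\sup_j v_p(n_j)\in\{0,1,\dots,\infty\}$. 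One then has to see that, once one restricts to maximal frequency integer sets, $D$ (equivalently $N$) determines the set: this is the combinatorial heart of the argument and is furnished by the structure theory of maximal frequency integer sets in \cite{g,as}. Equivalently, and more efficiently, one may simply invoke that $M_d$ depends only on $\mathrm{hull}(d)$ --- the very fact that makes $S_\Omega$ well defined --- to conclude $M_d=M_{\tilde d}$ directly.

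I expect this last step to be the only real obstacle: extracting an honest set-equality of maximal frequency integer sets from the ``mutually cofinal'' relation supplied by Proposition~\ref{thm:classification} rests on the nontrivial fact that the maximal frequency integer set is an invariant of the isomorphism class of the Cantor group (equivalently of the supernatural number / divisor-closure). The rest --- the passage through Lemma~\ref{lem:hulliso}, the non-periodicity remark, and the two implications built on Proposition~\ref{thm:classification} --- is routine.
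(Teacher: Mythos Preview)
The paper does not supply its own proof of this lemma: it is quoted as \cite[Theorem~2.1]{g}, the very same external result from which Proposition~\ref{thm:classification} is drawn. So there is no in-paper argument to compare your proposal against; both statements are simply imported from \cite{g}.

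Your outline is a sensible way to view the lemma as a repackaging of Proposition~\ref{thm:classification}, and you correctly isolate the only substantive step in the ``only if'' direction: upgrading the mutual-cofinality relation between $M_d$ and $M_{\tilde d}$ to the equality $M_d=M_{\tilde d}$. One caution, though: your second shortcut---``simply invoke that $M_d$ depends only on $\mathrm{hull}(d)$, the very fact that makes $S_\Omega$ well defined''---is circular as written. The well-definedness of $S_\Omega$ (i.e., that any two $d$ with $\mathrm{hull}(d)\cong\Omega$ share the same maximal frequency integer set) is exactly a special case of the ``only if'' direction you are proving; the paper asserts it in the paragraph preceding the lemma without independent justification, so you cannot lean on it here. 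Your first route, through the supernatural number/divisor-closure, is the honest one, and as you note it requires the structural input from \cite{g,as}. In sum, your sketch is correct in spirit but does not avoid an appeal to \cite{g}---which is precisely what the paper itself does by citing the result rather than proving it.
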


We need the following lemma.

\begin{lemma} \label{lem:iso}
Suppose we are given a Cantor group $\Omega$ and a minimal translation $T$. If $\mathrm{hull}(d) \cong \Omega$ with $d \in \ell^{\infty}(\Z)$, then
there is an $f \in C(\Omega,\R)$ such that $f(T^i(e)) = d_i$ for every $i \in \Z$.
\end{lemma}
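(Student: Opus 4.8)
The plan is to move the problem onto $\mathrm{hull}(d)$ by means of the given isomorphism, then to adjust that isomorphism so that it intertwines the prescribed translation $T$ with the shift $\sig$, and finally to take $f$ to be (the pullback of) the evaluation-at-$0$ functional. By hypothesis there is a topological group isomorphism $\Psi:\Omega\to\mathrm{hull}(d)$, and, like any such isomorphism, it carries the identity of $\Omega$ to the identity of $\mathrm{hull}(d)$, which by Proposition~\ref{prop:avilahull} is $\sig^0(d)=d$; thus $\Psi(e)=d$. Since $T$ is a translation, $T(\om)=\om\cdot\om_0$, so $\ti T:=\Psi\circ T\circ\Psi^{-1}$ is the minimal translation of $\mathrm{hull}(d)$ given by multiplication by $y:=\Psi(\om_0)$, whence $\Psi(T^i(e))=\ti T^i(d)=y^i$ for every $i\in\Z$. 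Writing $g:=f\circ\Psi^{-1}$, the desired identity $f(T^i(e))=d_i$ is therefore equivalent to $g(y^i)=d_i$ for all $i$.

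Now observe that the shift $\sig$ is itself a minimal translation of $\mathrm{hull}(d)$: on the dense orbit $\{\sig^k(d):k\in\Z\}$ it acts as multiplication by $\sig(d)$ (because $k\mapsto\sig^k(d)$ is a homomorphism), hence so everywhere by continuity; moreover the evaluation functional $\mathrm{ev}_0:\mathrm{hull}(d)\to\R$, $x\mapsto x_0$, is continuous and satisfies $\mathrm{ev}_0(\sig^i(d))=d_i$. So if $\Psi$ could be chosen with $y=\sig(d)$, we would simply take $g=\mathrm{ev}_0$ and be done. The heart of the matter is to improve $\Psi$ within its isomorphism class. Both $y=\Psi(\om_0)$ and $\sig(d)$ are topological generators of $\mathrm{hull}(d)$, since $T$ and $\sig$ are minimal. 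Because $\mathrm{hull}(d)$ is procyclic (see \cite{g}), its automorphism group acts transitively on the set of topological generators; choosing an automorphism $\alpha$ with $\alpha(y)=\sig(d)$ and replacing $\Psi$ by $\Psi':=\alpha\circ\Psi$, we obtain a topological group isomorphism $\Omega\to\mathrm{hull}(d)$ with $\Psi'(e)=d$ and $\Psi'(\om_0)=\sig(d)$, so that $\Psi'(T^i(e))=\sig^i(d)$ for all $i$. Then $f:=\mathrm{ev}_0\circ\Psi'\in C(\Omega,\R)$ satisfies $f(T^i(e))=\mathrm{ev}_0(\sig^i(d))=d_i$, as required.

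I expect the one nontrivial input to be the transitivity of $\mathrm{Aut}(\mathrm{hull}(d))$ on topological generators; this is exactly where the hypothesis $\mathrm{hull}(d)\cong\Omega$ enters, through the structure theory of procyclic Cantor groups and their frequency integer sets developed in \cite{g}. A more hands-on alternative that makes this bookkeeping explicit is to use Proposition~\ref{prop:limitform} to write $d=\sum_{j\ge1}p_j$ with $p_j$ being $n_j$-periodic and $n_j\mid n_{j+1}$; since $\mathrm{hull}(d)\cong\Omega$, each $n_j$ divides some element of the maximal frequency integer set of $\Omega$ (Proposition~\ref{thm:classification}, Lemma~\ref{lem:subfrequency}), which yields compatible continuous surjections $\pi_J:\Omega\to\Z/n_J\Z$ that can be normalized so that $\pi_J(\om_0)=1$; then the locally constant functions $(\sum_{j\le J}p_j)\circ\pi_J\in C(\Omega,\R)$ form a uniformly Cauchy sequence — because $\sum_{j\le J}p_j\to d$ uniformly on $\Z$ — whose limit $f$ satisfies $f(T^i(e))=\sum_j p_j(i)=d_i$. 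Either way, the substantive step is to match the periods of $d$ against the finite quotients of $\Omega$, and to do so compatibly so that the limit exists.
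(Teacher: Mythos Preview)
Your proof is correct, and in fact cleaner than the paper's. Both arguments ultimately define $f$ as $\mathrm{ev}_0$ pulled back along a topological group isomorphism $\Omega\to\mathrm{hull}(d)$ that intertwines $T$ with the shift $\sigma$; the difference is in how this isomorphism is obtained. The paper first invokes Lemma~\ref{lem:hulliso} to produce an auxiliary $\tilde f\in C(\Omega,\R)$ with $\mathrm{hull}(\tilde F(e))\cong\Omega$, then composes the tautological $T$--$\sigma$ equivariant map $\omega\mapsto\tilde F(\omega)$ with a group isomorphism $h:\mathrm{hull}(\tilde F(e))\to\mathrm{hull}(d)$, and defines $f(\omega)=h(\tilde F(\omega))_0$; the step ``$h(\sigma^{n_k}(\tilde F(e)))=\sigma^{n_k}(d)$'' is asserted without comment, which tacitly uses exactly the fact you isolate---that $h$ can be chosen to send the shift-generator to the shift-generator. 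You bypass the detour through $\tilde f$ entirely, work directly with $\Psi:\Omega\to\mathrm{hull}(d)$, and make explicit that the needed adjustment is the transitivity of $\mathrm{Aut}$ of a procyclic group on its topological generators (true because such a group is $\varprojlim\Z/n_k\Z$, its endomorphism ring acts by multiplication, and the units coincide with the generators). Your alternative via the periodic decomposition $d=\sum p_j$ and compatible quotients $\pi_J:\Omega\to\Z/n_J\Z$ normalized by $\pi_J(\omega_0)=1$ is also valid and has the virtue of being entirely hands-on; the compatibility of the $\pi_J$ is what guarantees that the locally constant approximants are uniformly Cauchy on all of $\Omega$, not just on the dense orbit.
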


\begin{proof}
By Lemma \ref{lem:hulliso} we have $\tilde{f} \in C(\Omega, R)$ such that $\mathrm{hull}((\tilde{f}(T^i(e)))_{i \in \Z}) \cong \Omega.$ Since
$\mathrm{hull}(d) \cong \Omega,$  we have a continuous isomorphism
$$h : \mathrm{hull}((\tilde{f}(T^i(e)))_{i \in \Z}) \to \mathrm{hull}(d)$$ with $h((\tilde{f}(T^i(e)))_{i \in \Z}) = d.$

Clearly, for $T^{n_k}(e) \in \Omega$ we have
$h((\tilde{f}(T^i(T^{n_k}(e))))_{i \in \Z}) = \sigma^{n_k}(d)$ since
$$(\tilde{f}(T^i(T^{n_k}(e))))_{i \in \Z} = \sigma^{n_k} ((\tilde{f}(T^i(e)))_{i \in \Z}).$$
If $\lim_{k \to \infty} T^{n_k}(e) = \omega,$ then $h((\tilde{f}(T^i(\om)))_{i \in \Z}) = \lim_{k\to \infty}\sigma^{n_k}(d)$, where
the limit exists since $h$ and $\tilde{f}$ are both continuous.
Define $f$ by $f(T^i(e)) = \sigma^i(d)_0 = d_i$.
We extend $f$ to the whole $\Omega$ by $f(\omega) = \lim_{k \to \infty} \sigma^{n_k}(d)_0$ if $\omega = \lim_{k \to \infty} T^{n_k}(e).$
By the previous analysis, $f$ is well defined and continuous. So there is an $f \in C(\Omega,\R)$ such that $(f(T^i(e)))_{i \in \Z} = d.$
\end{proof}

We see that, given a Cantor group $\Omega$ and a minimal translation
$T$, the elements of $C(\Omega,\R)$ parametrize a class of
limit-periodic potentials. Next, let us describe the periodic
elements of this class. Since $\Omega$ is Cantor, there exists a
decreasing sequence of Cantor subgroups $\Omega_k \subset \Omega$
with finite index $n_k$ such that $\bigcap \Omega_k = \{ e \}$.  We
say that $f \in C(\Omega,\R)$ is a periodic sampling function (of
period $n$) if $f(T^n(\om)) = f(\om)$ for every $\om \in \Omega$.
For $f \in C(\Omega,\R)$, we define
$$
f_{k}(\om) = \int_{\Omega_k}f(\om +
\tilde{\om})~d\mu_{\Omega_k}(\tilde\om),
$$
where $\mu_{\Omega_k}$ is the Haar measure on $\Omega_k$. Then
$f_{k}$ is an $n_k$-periodic sampling function. Clearly, there exist
compact subgroups with finite index contained in arbitrarily small
neighborhoods of $e$, and this shows that the set of periodic
sampling functions is dense in $C(\Omega, \R)$. Let $P_k$ be the set
of sampling functions which are defined on $\Omega / \Omega_k$. Then
$P_k \subset P_{k+1}$, the elements of $P_k$ are $n_k$-periodic, and
$P = \bigcup P_k$ is the set of all periodic sampling functions and
it is dense in $C(\Omega,\R)$.

\section{Distal Sequences}
\label{sec:distal}

In this section, we discuss approximation functions and distal sequences; compare \cite{p} and \cite{r}.

\begin{defn}
A function $Q(x) : [0,\infty)\to [1,\infty)$ is called an approximation function if both
$$
q(t) = t^{-4} \sup_{x \ge 0} Q(x) e^{-tx}
$$
and
\begin{equation} \label{equ:distalh}
h(t) = \inf_{\kappa_t} \prod^{\infty}_{i = 0} q(t_i)^{2^{-i-1}}
\end{equation}
are finite for every $t > 0$. In \eqref{equ:distalh}, $\kappa_t$ denotes the set of all sequences $t \ge t_1 \ge t_2 \ge \cdots \ge 0$ with $\sum{t_i} \leq t$.
\end{defn}

\begin{defn}
A sequence $d \in \ell^\infty(\Z)$ is called distal if for some approximation function $Q$, we have
$$
\inf_{i \in \Z} |d_i - d_{i+k}| \ge Q(|k|)^{-1}
$$
for every $k \in \Z \setminus \{0\}$.
\end{defn}

\begin{prop}
If $d \in \ell^{\infty}(\Z)$ is distal, then every $\tilde{d} \in \mathrm{hull}(d)$ is also distal.
\end{prop}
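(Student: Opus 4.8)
The plan is to exploit the fact that distality is witnessed by a single approximation function $Q$, and that $Q$ does not depend on which element of the hull we look at. Concretely, suppose $d$ is distal with approximation function $Q$, so that $\inf_{i \in \Z} |d_i - d_{i+k}| \ge Q(|k|)^{-1}$ for every $k \neq 0$. I would fix $\tilde d \in \mathrm{hull}(d)$ and an arbitrary $k \in \Z \setminus \{0\}$, and aim to show the same lower bound $\inf_{i \in \Z} |\tilde d_i - \tilde d_{i+k}| \ge Q(|k|)^{-1}$ with the \emph{same} $Q$; then $\tilde d$ is distal (indeed with the same approximation function).

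The key step is a limiting argument. By definition of $\mathrm{hull}(d)$ as the closure of $\mathrm{orb}(d)$ in $\ell^\infty(\Z)$, there is a sequence $k_m \in \Z$ with $\sigma^{k_m}(d) \to \tilde d$ in $\ell^\infty(\Z)$; in particular, for each fixed $i$, $(\sigma^{k_m}(d))_i = d_{i + k_m} \to \tilde d_i$ and $d_{i + k + k_m} \to \tilde d_{i+k}$ as $m \to \infty$. Now for each $m$ and each $i$ we have $|d_{i+k_m} - d_{i+k+k_m}| \ge \inf_{j \in \Z} |d_j - d_{j+k}| \ge Q(|k|)^{-1}$, since $i + k_m$ ranges over a subset of $\Z$. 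Passing to the limit $m \to \infty$ gives $|\tilde d_i - \tilde d_{i+k}| \ge Q(|k|)^{-1}$ for every $i$, hence $\inf_{i \in \Z} |\tilde d_i - \tilde d_{i+k}| \ge Q(|k|)^{-1}$. Since $k \neq 0$ was arbitrary and $Q$ is an approximation function independent of $\tilde d$ and $k$, the sequence $\tilde d$ is distal.

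There is no serious obstacle here; the only point requiring a word of care is that the uniformity in $i$ is preserved under the limit, which is exactly why one takes the infimum over $j \in \Z$ at the level of $d$ \emph{before} passing to the limit — the bound $|d_{i+k_m} - d_{i+k+k_m}| \ge Q(|k|)^{-1}$ holds for \emph{every} $i$ and \emph{every} $m$ with the same right-hand side, so the pointwise-in-$i$ limit inherits it, and then the infimum over $i$ does too. One should also note that $\ell^\infty$-convergence of $\sigma^{k_m}(d)$ to $\tilde d$ certainly implies coordinatewise convergence, which is all that is used. Thus the same $Q$ works for every element of the hull, which is in fact the feature that makes distality useful as a hypothesis uniform over $\Omega$.
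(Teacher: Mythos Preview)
Your argument is correct and is precisely the spelling-out of what the paper means by ``follows readily from the definition'': the distality inequality is shift-invariant with the same $Q$, and it passes to $\ell^\infty$-limits, hence to every element of the hull. There is nothing to add.
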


\begin{proof}
This follows readily from the definition.
\end{proof}

The following lemma shows how to generate distal sequences in our framework.

\begin{lemma}\label{lem:subdistal}
Given a Cantor group $\Omega$ satisfying the condition $\mathscr{A}$ and a minimal translation $T$, there exists an $f \in
C(\Omega, \R)$ such that $(f(T^i(e)))_{i \in \Z }$ is a distal sequence.
\end{lemma}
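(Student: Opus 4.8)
The plan is to construct the distal sequence $d$ first as a concrete limit-periodic sequence with the required arithmetic separation property, and then invoke Lemma~\ref{lem:iso} to realize it as $(f(T^i(e)))_{i\in\Z}$ for a suitable $f \in C(\Omega,\R)$. The key point is that Lemma~\ref{lem:iso} lets us work entirely on $\Z$: we only need to produce a sequence $d \in \ell^\infty(\Z)$ that is (i) limit-periodic with $\mathrm{hull}(d) \cong \Omega$, and (ii) distal. By Lemma~\ref{lem:subfrequency}, condition (i) amounts to building $d$ so that its maximal frequency integer set equals $S_\Omega = \{n_k\}$, where by hypothesis $\mathscr{A}$ we have $n_k < n_{k+1} \le n_k^m$ for a fixed $m \ge 2$.

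First I would fix the building blocks. Using Proposition~\ref{prop:limitform} in reverse, I would write $d(j) = \sum_{k\ge 1} p_k(j)$ with $p_k$ an $n_k$-periodic sequence; the natural choice is to make $p_k$ supported (mod $n_k$) on a single residue class, say $p_k(j) = a_k \cdot \mathbf{1}[j \equiv 0 \ (\mathrm{mod}\ n_k)]$, with amplitudes $a_k > 0$ to be chosen decaying fast. This ensures $d$ is limit-periodic as long as $\sum a_k < \infty$, and one checks its maximal frequency integer set is exactly $\{n_k\}$ (each scale genuinely appears because $a_k \ne 0$), so $\mathrm{hull}(d) \cong \Omega$ by Propositions~\ref{thm:classification}, \ref{lem:subfrequency}. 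The heart of the matter is then to estimate, for each $k \ne 0$, the quantity $\inf_{j\in\Z} |d_j - d_{j+k}|$ from below by $Q(|k|)^{-1}$ for some approximation function $Q$. Here the telescoping structure helps: if $n_{l}$ is the largest element of $S_\Omega$ dividing $k$ (with $l = 0$, $n_0 := 1$, if $k$ is not divisible by any $n_k$), then all the periodic pieces $p_1, \dots, p_l$ cancel in $d_j - d_{j+k}$, and the difference is governed by the scale-$n_{l+1}$ term (and higher). Choosing amplitudes like $a_k = \exp(-\text{(something growing in }k)$ — for instance $a_k$ comparable to a negative power tower or, more carefully, $a_k \approx 2^{-b_k}$ with $b_k$ chosen so that the scale-$l+1$ term dominates the tail — one obtains $\inf_j |d_j - d_{j+k}| \gtrsim a_{l+1}$, and since $n_{l+1} \le |k|^{?}$ fails in general but $n_l \le |k|$ and $n_{l+1} \le n_l^m \le |k|^m$, the separation is bounded below by a function of $|k|$ alone. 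That function, call it $Q(|k|)^{-1}$, will be of the form $\exp(-C(\log|k|)^{?})$ or similar, and the job is to check it is an approximation function.

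The main obstacle I anticipate is precisely verifying that the resulting $Q$ is an approximation function, i.e., that both $q(t) = t^{-4}\sup_{x\ge 0} Q(x)e^{-tx}$ and the infimum-of-products $h(t)$ in \eqref{equ:distalh} are finite for all $t > 0$. Finiteness of $q(t)$ forces $Q$ to grow subexponentially, so the amplitudes $a_k$ cannot decay too fast; on the other hand, to get the cancellation estimate to close, the $a_k$ must decay fast enough that the scale-$(l+1)$ contribution genuinely dominates the combined tail $\sum_{k > l+1} (\text{oscillation of } p_k)$ and also beats any near-cancellation within the scale-$(l+1)$ piece itself. This is where condition $\mathscr{A}$ is essential: the bound $n_{k+1} \le n_k^m$ means $\log n_{k+1} \le m \log n_k$, so $\log n_k$ grows at most geometrically, hence $n_k$ itself grows at most like $\exp(c\, m^k)$; this is exactly the regime in which one can pick $a_k$ (e.g.\ $a_k = \exp(-(\log n_k)^2)$ or $a_k = \exp(-\gamma m^k)$ tuned appropriately) so that $Q(x)$ ends up growing only like $\exp(c(\log x)^2)$, which is subexponential and admits the $q$- and $h$-finiteness verification carried out in P\"oschel's framework (and in \cite{r}). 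So the plan concludes: choose $\{a_k\}$ as a function of $\{n_k\}$ using $\mathscr{A}$, define $d = \sum a_k p_k$, verify limit-periodicity and the correct frequency integer set, establish the telescoping lower bound $\inf_j|d_j-d_{j+k}| \ge Q(|k|)^{-1}$, check $Q$ is an approximation function, and finally apply Lemma~\ref{lem:iso} to transfer $d$ to the desired $f \in C(\Omega,\R)$.
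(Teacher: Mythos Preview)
Your overall architecture is right---build a concrete limit-periodic $d$ with frequency integers inside $S_\Omega$, verify distality, then invoke Lemma~\ref{lem:iso}---but the specific building blocks you propose do \emph{not} yield a distal sequence. With $p_k(j) = a_k\,\mathbf 1[j\equiv 0\ (\mathrm{mod}\ n_k)]$ and $n_1\mid n_2\mid\cdots$, the sequence $d=\sum_k p_k$ vanishes at every $j\not\equiv 0\ (\mathrm{mod}\ n_1)$. Now fix any $k\neq 0$ and let $l$ be as in your telescoping. The pieces $p_1,\dots,p_l$ cancel in $d_j-d_{j+k}$; but the remaining pieces $p_{l+1},p_{l+2},\dots$ are supported on multiples of $n_{l+1}$, so as soon as you choose $j$ with neither $j$ nor $j+k$ divisible by $n_{l+1}$ (such $j$ exist whenever $n_{l+1}\ge 3$, which holds for all $l\ge 1$ and typically for $l=0$ as well), every surviving term vanishes and $d_j=d_{j+k}$. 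Hence $\inf_j|d_j-d_{j+k}|=0$ and distality fails outright. Single-residue indicators are simply too sparse: a distal sequence must separate \emph{every} pair of sites, so each periodic layer has to be ``active'' on the whole period, not on one residue class.

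The paper's construction fixes exactly this. It uses the full-range sawtooth $a_v(i)=i\bmod n_v\in\{0,\dots,n_v-1\}$ and sets $d_i=\sum_{v\ge 1} a_v(i)/(n_{v-1}^2 n_v)$, so each layer distinguishes all residues mod $n_v$. A second idea you are missing is a preliminary thinning of $S_\Omega$: using condition~$\mathscr A$ one extracts a cofinal subset $I_0=\{n_k\}\subset S_\Omega$ with the two-sided gap $n_k^3\le n_{k+1}\le n_k^{3m}$. The lower bound $n_{k+1}\ge n_k^3$ is what makes the dominant-scale estimate close (it gives \eqref{equ:estidistal}), and the upper bound $n_{k+1}\le n_k^{3m}$ converts the bound $|d_{i_1}-d_{i_2}|\gtrsim 1/(n_{k-1}n_k)$ into a bound purely in terms of $|i_1-i_2|$. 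The outcome is a \emph{polynomial} approximation function $Q(x)\asymp x^{3m+1}$, not the subexponential $\exp(c(\log x)^2)$ you were bracing for; verifying that a polynomial $Q$ is an approximation function is immediate. Finally, $I_0$ being a frequency integer set for $d$ and cofinal in $S_\Omega$ gives $\mathrm{hull}(d)\cong\Omega$ via Proposition~\ref{thm:classification}, after which Lemma~\ref{lem:iso} applies as you intended.
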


\begin{proof}
Given a Cantor group $\Omega$ and a minimal translation $T$, by Lemma~\ref{lem:hulliso} there is a limit-periodic potential $l$ such that $\mathrm{hull}(l) \cong \Omega$. Since $\Omega$ satisfies the condition $\mathscr{A}$, there exists $m \ge 2$ such that for the elements of its maximal frequency integer set $S_{\Omega} = \{ n_k \}$, we have $n_{k-1} < n_k \leq  n^m_{k-1}$ for every $k$.

Consider $S_\Omega$. Here we let $n_1 > 1.$ For $n_1 \in S_\Omega$,
there must exist some $n_k \in [n^3_1,n^{3m}_1]$. If not, we pick
the largest $n_i \in [n_1,n^3_1)$ and then $n_{i+1}$ will be
strictly larger than $n^{3m}_1$. Then we have $n_{i+1} > n^{3m}_1 >
n^m_i$ which contradicts the assumption. So we can pick $n_k$ such
that $n^3_1 \leq n_k \leq n^{3m}_1$. By induction, we can pick a
subset of $S_{\Omega}$ which we still denote by $I_0 = \{ n_k \}$
satisfying $n^3_k \leq n_{k+1} \leq n^{3m}_k$ for every $k \in
\Z^+$. Without any contradiction, we take $n_0 = 1$ for the following
computation.

Define $a_v (i)= j$ where $0 \leq j < n_v$ and $i=j\ (\mathrm{mod}\
n_v)$, so $a_v$ is $n_v$-periodic. Let $d = (d_i)_{i \in \Z}$ and
$d^{(k)} = (d^{(k)}_i)_{i \in \Z}$, where
$$
d_i = \sum^{\infty}_{v = 1} \frac{a_v(i)}{n^2_{v-1}n_v} \quad \text{ and } \quad d^{(k)}_i = \sum^{k}_{v = 1} \frac{a_v(i)}{n^2_{v-1} n_v}.
$$
By the divisibility property of any frequency integer set, $d^{(k)}$
is an $n_k$-periodic sequence. Since for every $i \in \Z$ and $k \in
\Z_+$ we have
$$
\left| d_i - d^{(k)}_i \right| = \left| \sum^{\infty}_{v= k+1} \frac{a_v(i)}{n^2_{v-1}n_v} \right| \leq \sum^{\infty}_{v= k+1} \frac{1}{n^2_{v-1}},
$$
it follows that $d^{(k)}$ converges to $d$ uniformly. Thus, $d$ is
limit-periodic and one of its frequency integer sets is $I_0$.

For any $i_1 \neq i_2$, fix $k$ so that $n_{k-1} \leq |i_{1} -
i_{2}| < n_k$. If $k = 1$, then $|d^{(1)}_{i_1} - d^{(1)}_{i_2}| \ge
\frac{1}{n_1}.$ Also, we have
\begin{align*}
|(d_{i_1} -d^{(1)}_{i_1}) - (d_{i_2} - d^{(1)}_{i_2}) | & \leq n_1
\sum^{\infty}_{v=2} \frac{1}{n^2_{v-1}n_v} \\
& \leq \frac{8}{7 n_1 n_2}\\
& \leq \frac{4}{7 n_1}
\end{align*}
So it is easy to see that $|d_{i_1} - d_{i_2}| \ge \frac{3}{7 n_1}
\ge \frac{2}{3 n^{3m+1}_1}.$

If $k \ge 2$, we have \be\label{equ:estidistal}
\frac{1}{n^2_{k-2}n_{k-1}}
> \frac{2(n_{k} - 1)}{n^2_{k-1} n_{k}}, \ee
since $n^3_i \leq n_{i+1} \leq n^{3m}_i$. Moreover, we have
\begin{align*}
|d^{(k)}_{i_1} - d^{(k)}_{i_2}| & = \left |\sum^{k}_{v = 1} \frac{(a_v(i_1) - a_v(i_2))}{n^2_{v-1}n_v}\right | \\
& = \left |\sum^{k-1}_{v = 1} \frac{(a_v(i_1) - a_v(i_2))}{n^2_{v-1}n_v} +  \frac{i_1 - i_2}{n^2_{k-1}n_k} \right |. \\
\end{align*}
From (\ref{equ:estidistal}) we conclude that $|\sum^{k-1}_{v = 1}
\frac{(a_v(i_1) - a_v(i_2))}{n^2_{v-1}n_v}|$ is $0$ or larger than
$\frac{2(n_{k} - 1)}{n^2_{k-1} n_{k}}.$ So $|d^{(k)}_{i_1} -
d^{(k)}_{i_2}| = \left |\sum^{k-1}_{v = 1} \frac{(a_v(i_1) -
a_v(i_2))}{n^2_{v-1}n_v} +  \frac{i_1 - i_2}{n^2_{k-1}n_k} \right |
\ge \frac{n_{k-1}}{n^2_{k-1}n_k} = \frac{1}{n_{k-1}n_k}.$
We also have
\begin{align*}
|(d_{i_1} - d^{(k)}_{i_1}) - (d_{i_2} - d^{(k)}_{i_2}) | & = \left| \sum^{\infty}_{v = k+1} \frac{(a_v(i_{1}) - a_v(i_{2}))}{n^2_{v-1}n_v} \right |  \\
& \leq n_k \sum^{\infty}_{v=k+1} \frac{1}{n^2_{v-1}n_v} \\
& \leq \sum^{\infty}_{v=0}\frac{1}{n_k n_{k+1} 4^v}  \\
& = \frac{4}{3 n_k n_{k+1}}.
\end{align*}
Thus, we get
\begin{align*}
|d_{i_1} - d_{i_2}| & \ge  \frac{1}{n_k n_{k-1}} - \frac{4}{3 n_k n_{k+1}} \\
& \ge \frac{2}{3 n_k n_{k-1}} \\
& \ge \frac{2}{3 n^{3m+1}_{k-1}}\\
& \ge \frac{2}{3 |i_{1}-i_{2}|^{3m+1}}.
\end{align*}
Therefore, $d$ is a distal sequence with an approximation function
$$
Q(x) = \begin{cases} \frac{3 n^{3m+1}_1}{2}, & 0 \leq x < n_1; \\
\frac{3x^{3m+1}}{2}, & x \ge n_1.
\end{cases}
$$

By Lemma~\ref{lem:subfrequency}, we have $\mathrm{hull}(d) \cong \mathrm{hull}(l) \cong \Omega$. By Lemma~\ref{lem:iso}
there is an $f \in C(\Omega, \R)$ such that $(f(T^i(e)))_{i \in \Z} = d$.
\end{proof}

\begin{remark}
For any $r \ge 0$, let
$$
G(x) = \begin{cases} 1, & 0 \leq x < 1; \\ x^r, & x > 1.
\end{cases}
$$
It is not hard to see that
$$
h(t) \leq c t^{-4-r}
$$
by choosing $t_i = t 2^{-i-1}$. The constant $c$ depends on $r$. It follows that $G(x)$ is an approximation function. In particular, this also shows that $Q(x)$ above is indeed an approximation function.
\end{remark}

\section{P\"oschel's Results} \label{sec:poeschel}

In this section we rewrite some of P\"oschel's results from \cite{p}, tailored to our purpose.

Let $\tau \ge 1$ be an integer and $\mathfrak{M}$ a Banach algebra
of real $\tau$-dimensional sequences $a = (a_i)_{i \in \Z^\tau}$
with the operations of pointwise addition and multiplication of
sequences. In particular, the constant sequence $1$ is supposed to
belong to $\mathfrak{M}$ and have norm one. Moreover, $\mathfrak{M}$
is required to be invariant under translation: if $a \in
\mathfrak{M}$, then $\| T_k a \|_{\mathfrak{M}} = \| a
\|_{\mathfrak{M}}$ for all $k \in \Z^\tau$, where $T_k a_i = a_{i
+ k}$.

We denote by $M$ the space of all matrices $A = (a_{i,j})_{i,j \in
\Z^\tau}$ satisfying $A_k = (a_{i, i+k}) \in \mathfrak{M}$, $k \in
\Z^\tau$, that is, $A_k$ is the $k$-th diagonal of $A$ and it is
required to belong to $\mathfrak{M}$. In $M$, we define a Banach
space
$$
M^s = \{A \in M, \p A\p_s < \infty\},\quad 0 \leq s \leq \infty,
$$
where
$$
\| A\|_s = \sup_{k \in \Z^\tau} \| A_k\|_{\mathfrak{M}} e^{|k|s}.
$$
Obviously,
$$
M^s \subset M^t,\quad \p \cdot \p_s \ge \p\cdot \p_t,\quad 0 \leq t \leq s \leq \infty.
$$
In particular, $M^{\infty}$ is the space of all diagonal matrices in $M$.

\begin{thm}[Theorem A, \cite{p}]\label{thm:poeschel1}
Let $D$ be a diagonal matrix whose diagonal $d$ is a distal sequence
for $\mathfrak{M}$. Let $0 < s \leq \infty$ and $0 < \sig \leq
\min\{1, \frac{s}{2} \}$. If $P \in M^s$ and $\p P \p_s \leq \delta
\cdot h(\frac{\sig}{2})^{-1}$, where $\delta > 0$ depends on the
dimension $\tau$ only, then there exists another diagonal matrix
$\tilde{D}$ and an invertible matrix $V$ such that
$$
V^{-1}(\tilde{D}+P)V = D .
$$
In fact, $V,V^{-1} \in M^{s-\sig}$ and $\tilde{D} - D \in M^{\infty}$ with $$\p V-I \p_{s-\sig}, \p V^{-1}-I \p_{s-\sig} \leq C \cdot \p P \p_s,$$
$$
\p \tilde{D} - D + [P]\p_{\infty} \leq C^2 \cdot \p P \p^2_s,
$$
where $C = \delta^{-1}\cdot h(\frac{\sig}{2})$, and $[\cdot]$
denotes the canonical projection $M^s \to M^{\infty}$. If $P$ is
Hermitian, then $V$ can be chosen to be unitary on
$\ell^{2}(\Z^\tau)$. Note that $h$ is the function
\eqref{equ:distalh} associated with $d$.
\end{thm}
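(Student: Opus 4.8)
The plan is to carry out Pöschel's KAM-type construction. Throughout, $[\,\cdot\,]$ denotes the projection onto diagonal matrices (as in the statement), and for a matrix $A$ I write $A^{\mathrm{od}}=A-[A]$ for its off-diagonal part. The first observation is a reformulation: since $V^{-1}(\tilde D+P)V=D$ is the same as $\tilde D+P=VDV^{-1}$, it suffices to produce a near-identity $V$ with $(VDV^{-1})^{\mathrm{od}}=P^{\mathrm{od}}$; then setting $\tilde D:=[VDV^{-1}]-[P]$ gives $\tilde D+P=VDV^{-1}$, i.e.\ $V^{-1}(\tilde D+P)V=D$. Such a $V$ will be built as an infinite product $V=\lim_n e^{W_1}\cdots e^{W_n}$ with each $W_j$ an off-diagonal element of some $M^{s_j}$, so that $V,V^{-1}\in M^{s-\sigma}$ by the Banach-algebra structure; when $P$ is Hermitian and $d$ is real, each $W_j$ will turn out to be skew-Hermitian, so each $e^{W_j}$ and hence $V$ is unitary on $\ell^2(\Z^\tau)$.

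The single analytic ingredient is the homological equation $[W,D]=R$ for a given off-diagonal $R$. Since $[W,D]_{i,i+k}=(d_{i+k}-d_i)W_{i,i+k}$, on the $k$-th diagonal this reads $(T_kd-d)\,W_k=R_k$ in $\mathfrak M$, where $(T_kd)_i=d_{i+k}$; distality of $d$ for $\mathfrak M$ is precisely the statement that $T_kd-d$ is invertible in $\mathfrak M$ with $\|(T_kd-d)^{-1}\|_{\mathfrak M}\le Q(|k|)$, which gives $W_k=(T_kd-d)^{-1}R_k$ and $\|W_k\|_{\mathfrak M}\le Q(|k|)\|R_k\|_{\mathfrak M}$. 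Weighting by $e^{|k|(s-t)}$ and using $\sup_{x\ge 0}Q(x)e^{-xt}=t^4q(t)$ yields $\|W\|_{s-t}\le c(\tau)\,q(t)\,\|R\|_s$; here and below the polynomial factors — the $t^4$, and the factor $\asymp\eta^{-\tau}$ in the product estimate $\|AB\|_{s'-\eta}\le c(\tau)\,\eta^{-\tau}\|A\|_{s'}\|B\|_{s'}$, which is where the dimension $\tau$ enters — are absorbed into $q$ and a dimensional constant $c(\tau)$. The essential point is that this inversion costs only a loss $t$ of exponential decay rate, governed by $q(t)$, and that it uses only the fixed distal diagonal $D$.

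The iteration preserves, after $n$ steps, an identity $V_n^{-1}(\tilde D_n+P)V_n=D+P_n$ with $V_n=e^{W_1}\cdots e^{W_n}$, $\tilde D_n$ diagonal, and $P_n\in M^{s_n}$ off-diagonal and super-exponentially small, starting from $V_0=I$, $\tilde D_0=D-[P]$, $P_0=P^{\mathrm{od}}$. At the inductive step one solves $[W_{n+1},D]=P_n$, so $\|W_{n+1}\|_{s_{n+1}}\le c(\tau)q(t_{n+1})\|P_n\|_{s_n}$; because of the cancellation $P_n-[W_{n+1},D]=0$, the expansion of $e^{-W_{n+1}}(D+P_n)e^{W_{n+1}}$ consists, apart from $D$, only of iterated commutators each carrying at least one factor $W_{n+1}$ and one $P_n$, hence of total size $O\!\left(c(\tau)q(t_{n+1})\|P_n\|^2\right)$. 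A small diagonal correction $\Delta_{n+1}$ — determined by a contraction fixed-point equation with constant $O(\|P\|_s)$, implicit only because conjugating $\Delta_{n+1}$ by $V_n$ and the stray commutators involve $\Delta_{n+1}$ itself — lets one set $\tilde D_{n+1}=\tilde D_n+\Delta_{n+1}$, absorb the diagonal part of the remainder, and be left with $\|P_{n+1}\|_{s_{n+1}}\le c(\tau)q(t_{n+1})\|P_n\|_{s_n}^2$. Iterating this quadratic bound with $s_n=s-(t_1+\cdots+t_n)$ and $\sum_j t_j\le\sigma/2$ (the remaining $\sigma/2$ covering the product estimates) gives $\|P_n\|_{s_n}\lesssim\bigl(c(\tau)\|P\|_s\prod_{j\ge1}q(t_j)^{2^{-j}}\bigr)^{2^n}$, which tends to $0$ super-exponentially exactly when $\|P\|_s\prod_{j\ge1}q(t_j)^{2^{-j}}$ is below a dimensional constant; after reindexing, optimizing over admissible $(t_j)$ with $\sum_j t_j\le\sigma/2$ turns this into the stated hypothesis $\|P\|_s\le\delta\,h(\sigma/2)^{-1}$, $\delta=\delta(\tau)$. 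Then $V=\lim V_n$ exists in $M^{s-\sigma}$, $\tilde D=\lim\tilde D_n$ in $M^\infty$, and $V^{-1}(\tilde D+P)V=D$; tracking the constants gives $\|V-I\|_{s-\sigma},\|V^{-1}-I\|_{s-\sigma}\le C\|P\|_s$ with $C=\delta^{-1}h(\sigma/2)$; and since $\tilde D_0=D-[P]$, we have $\tilde D-D+[P]=\sum_{n\ge1}\Delta_n$, whose first term dominates, so $\|\tilde D-D+[P]\|_\infty\le C^2\|P\|_s^2$. Skew-Hermiticity of each $W_j$ when $P=P^*$ and $d$ is real is preserved at every step, so $V$ is unitary.

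The hard part is the quantitative bookkeeping of the third paragraph: one must show that the $2^n$-power decay of the successive off-diagonal errors $P_n$ outruns the unbounded growth of the loss factors $q(t_n)$ forced by the shrinking domains, and that the optimal allocation of the regularity budget $\sum_j t_j\le\sigma/2$ is exactly the one defining $h(\sigma/2)$ — indeed the peculiar shape of the definitions of $q$ (the $t^{-4}$) and $h$ (the exponents $2^{-i-1}$ and the infimum) is reverse-engineered from precisely this convergence analysis, and it is there that the smallness threshold and the dependence $\delta=\delta(\tau)$ are pinned down (the dimension entering only through $\sum_{j\in\Z^\tau}e^{-\eta|j|}\asymp\eta^{-\tau}$). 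One must also verify that the implicit equation for the diagonal correction $\Delta_{n+1}$ is genuinely contractive and that the scheme is compatible with the skew-Hermitian normalization in the Hermitian case; these are routine but must be threaded carefully through the iteration.
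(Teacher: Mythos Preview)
The paper does not prove this theorem at all: it is quoted verbatim as ``Theorem~A'' from P\"oschel's paper \cite{p} in a section explicitly titled ``P\"oschel's Results,'' and is used as a black box in the proof of Theorem~\ref{thm:main}. There is therefore no proof in the present paper to compare your attempt against.

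That said, your sketch is a faithful outline of P\"oschel's actual argument in \cite{p}: the KAM iteration driven by the homological equation $[W,D]=R$, solved diagonal-by-diagonal via the distality bound $\|(T_kd-d)^{-1}\|_{\mathfrak M}\le Q(|k|)$ at the cost of a loss $t$ in the exponential weight governed by $q(t)$; the quadratic recursion $\|P_{n+1}\|_{s_{n+1}}\lesssim q(t_{n+1})\|P_n\|_{s_n}^2$; and the observation that the definition of $h$ is precisely what is needed to make the accumulated product $\prod q(t_j)^{2^{-j}}$ finite after optimizing over the allocation $\sum t_j\le\sigma/2$. Your identification of where the dimension $\tau$ enters (through $\sum_{j\in\Z^\tau}e^{-\eta|j|}\asymp\eta^{-\tau}$) and of the skew-Hermitian structure in the Hermitian case is also correct. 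The only caveat is that your third paragraph openly acknowledges that the quantitative bookkeeping --- the contraction for the diagonal correction $\Delta_{n+1}$, the precise constants, and the compatibility of the normalization through the iteration --- is left as ``routine but must be threaded carefully''; this is exactly the content of P\"oschel's Sections~3--5, so what you have written is a correct road map rather than a proof.
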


An important consequence of the preceding theorem for discrete Schr\"odinger operators is the following.

\begin{thm}[Corollary A, \cite{p}]\label{thm:pcor}
Let $d$ be a distal sequence for some translation invariant Banach
algebra $\mathfrak{M}$ of $\tau$-dimensional real sequences. Then
for $0 \leq \eps \leq \eps_0, \eps_0 > 0$ sufficiently small, there
exists a sequence $\tilde{d}$ with $\tilde{d} -d \in \mathfrak{M}$,
$\p \tilde{d} - d \p_{\mathfrak{M}} \leq \frac{\eps^2}{\eps^2_0},$
such that the discrete Schr\"odinger operator
$$
(\tilde{H}u)_i = \eps \sum_{|l|=1}u_{i + l} + \tilde{d}_i u_i, \quad
i \in \Z^\tau
$$
has eigenvalues $\{d_i : i \in \Z^\tau \}$ and a complete set of
corresponding exponentially localized eigenvectors with decay rate
$1 + \log\frac{\eps_0}{\eps}$.
\end{thm}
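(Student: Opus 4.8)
The plan is to obtain Theorem~\ref{thm:pcor} as a direct application of Theorem~\ref{thm:poeschel1}, using the $\eps$-scaled nearest-neighbor hopping as the perturbation. I would set $P$ to be the matrix with $(P)_{i,j} = \eps$ when $|i - j| = 1$ and $(P)_{i,j} = 0$ otherwise, so that $\tilde H = \tilde D + P$ is precisely the operator in the statement once $\tilde d$ is declared to be the diagonal of $\tilde D$. This $P$ has three properties that drive the argument: its only nonzero diagonals are the $2\tau$ unit-vector diagonals, each equal to the constant sequence $\eps \cdot 1$, so that $P \in M^s$ with $\|P\|_s = \eps e^s$ for every $s$; it is purely off-diagonal, so the projection $[P]$ onto $M^\infty$ vanishes; and it is Hermitian, which lets me invoke the final clause of Theorem~\ref{thm:poeschel1} to take $V$ unitary.

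The second step is to calibrate the scales $s$ and $\sigma$ together with the constant $\eps_0$, which is fixed once and for all. The applicability hypothesis reads $\eps e^s \le \delta\, h(\sigma/2)^{-1}$, which caps $s$ from above; since Theorem~\ref{thm:poeschel1} produces $V, V^{-1} \in M^{s-\sigma}$, the eigenvector decay rate will be $s - \sigma$, which I want to push up, while the perturbation size is governed by $C\|P\|_s = \delta^{-1} h(\sigma/2)\, \eps e^s$. I would fix $\sigma$ admissibly and choose $s$ so that $\eps e^s$ equals a prescribed fraction of the threshold; writing $e^s \asymp \delta/(h(\sigma/2)\,\eps)$ gives $s = \log(1/\eps) + \mathrm{const}$, and I would then \emph{define} $\eps_0$ by absorbing this additive constant so that $s - \sigma = 1 + \log(\eps_0/\eps)$. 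Taking $\eps_0$ sufficiently small ensures $\sigma \le \min\{1, s/2\}$ and that the smallness hypothesis holds across the stated range of $\eps$.

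With the parameters chosen, I would apply Theorem~\ref{thm:poeschel1} to get a diagonal $\tilde D$ and an invertible (unitary) $V$ with $V^{-1}(\tilde D + P)V = D$, i.e.\ $\tilde H = VDV^{-1}$. The conclusions then follow mechanically. Since $D$ is diagonal with entries $d_i$ and $V$ is boundedly invertible, $\tilde H$ has point spectrum $\{d_i\}$ with eigenvectors $V\delta_i$, and these form a complete set (an orthonormal basis in the Hermitian case) because $V$ is invertible. For the decay I would use that pointwise evaluation $a \mapsto a_k$ is a character of the unital Banach algebra $\mathfrak{M}$ and therefore has norm at most one, whence $|(V\delta_j)_i| = |(V - I)_{i,j}| \le \|(V - I)_{i-j}\|_{\mathfrak{M}} \le \|V - I\|_{s-\sigma}\, e^{-|i-j|(s-\sigma)}$; this exhibits the $j$-th eigenvector as exponentially localized about its center $j$ with rate $s - \sigma = 1 + \log(\eps_0/\eps)$. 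The perturbation bound then falls out of the second estimate of Theorem~\ref{thm:poeschel1}: because $[P] = 0$ and a diagonal matrix has the same norm in every $M^s$, one has $\|\tilde d - d\|_{\mathfrak{M}} = \|\tilde D - D\|_\infty \le C^2 \|P\|_s^2 = (\delta^{-1} h(\sigma/2)\, \eps e^s)^2$, which the calibration of $s$ controls by $\eps^2/\eps_0^2$.

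The main obstacle is precisely the joint calibration in the second step. The applicability cap forces $\eps e^s$ to be small, a large decay rate wants $s$ large, and the target $\|\tilde d - d\|_{\mathfrak{M}} \le \eps^2/\eps_0^2$ wants $C\|P\|_s = \delta^{-1} h(\sigma/2)\, \eps e^s$ to be of order $\eps/\eps_0$; these demands pull against one another, and reconciling them requires exploiting the freedom in $\sigma$ and the defining relation for $\eps_0$ while tracking the constant $C = \delta^{-1} h(\sigma/2)$ carefully rather than arguing by scaling alone. This is the step where the tension between a decay rate that grows like $\log(1/\eps)$ and a perturbation that shrinks like $\eps^2$ must be resolved, and it is where I expect the real work to lie.
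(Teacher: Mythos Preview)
The paper does not prove this statement; Theorem~\ref{thm:pcor} is quoted as Corollary~A of P\"oschel~\cite{p} and no argument is supplied here. So there is no in-paper proof to compare against. That said, your plan is exactly the intended derivation: take $P$ to be the $\eps$-scaled nearest-neighbor hopping, note $\|P\|_s=\eps e^{s}$, $[P]=0$, $P$ Hermitian, and apply Theorem~\ref{thm:poeschel1}; read the eigenvectors off the columns of $V$ using $V\in M^{s-\sigma}$, and read $\tilde d$ off the diagonal of $\tilde D$. Your deduction of pointwise exponential decay from the $M^{s-\sigma}$ norm via multiplicativity of point evaluations is correct and is precisely how the present paper uses the result later (see the proof of Theorem~\ref{thm:main}).

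You are also right that the calibration is the only genuine issue, and in fact a \emph{single} application of Theorem~\ref{thm:poeschel1} with one pair $(s,\sigma)$ cannot deliver both conclusions as stated. With $\sigma\le 1$ fixed and $s-\sigma=1+\log(\eps_0/\eps)$ one computes $\eps e^{s}=e^{1+\sigma}\eps_0$, which is independent of $\eps$; hence $C^{2}\|P\|_s^{2}$ is a constant, not $O(\eps^{2}/\eps_0^{2})$. Conversely, forcing $C\|P\|_s\asymp\eps/\eps_0$ pins $s$ to a constant and kills the $\log(\eps_0/\eps)$ decay rate. The resolution in P\"oschel's argument is that the conjugating data $(\tilde D,V)$ produced by the KAM iteration do not depend on the bookkeeping parameters $(s,\sigma)$: one obtains the $\eps^{2}$-bound on $\tilde d-d$ by reading the estimate at a fixed $s$ (independent of $\eps$), and separately obtains the decay rate by reading the $M^{s-\sigma}$ estimate on the \emph{same} $V$ at the $\eps$-dependent scale. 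Your plan is sound; just be aware that the tension you flag is resolved by this two-scale reading (equivalently, by the uniqueness of $(\tilde D,V)$ implicit in the iteration), not by a single clever choice of $(s,\sigma)$.
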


Next let us discuss how to apply the above results.\\[2mm]
\noindent {\bf P\"oschel's Example.} Fix $\tau \ge 1$, and let
$\mathcal{P}$ be the set of all real $\tau$-dimensional sequences $a
= (a_i)$ with period $2^n, n \ge 0$, in each dimension; that is,
$a_i = a_j, i-j \in 2^n \Z^\tau$. The closure of $\mathcal{P}$ with
respect to the sup norm $\p\cdot \p_\infty$ is a Banach algebra,
which we denote by $\mathcal{L}$.  It is a subspace of the space of
all limit periodic sequences.

Let $\alpha_v,v\ge 1$, be the characteristic function of the set
$$
A_v =
\begin{cases} \bigcup_{N \in \Z} [N\cdot2^v, N\cdot2^v + 2^{v-1}), & v  \text{ even}; \\
\bigcup_{N \in \Z} [N\cdot2^v+2^{v-1},N\cdot2^v+2^v), & v  \text{ odd.}
\end{cases}
$$
Then, $\alpha_v$ has period $2^v$. Construct an $\tau$-dimensional
sequence $d = (d_i)$ such that
$$
d_i = \sum^{\infty}_{v = 1} \sum^\tau_{\mu=1}
\alpha_v(i_\mu)2^{-(v-1)\tau-\mu}, i=(i_1,\cdots,i_\tau) \in
\Z^\tau,
$$
belongs to $\mathcal{L}$ and lies dense in $[0,1]$. It is a distal sequence for $\mathcal{L}$ with
$$
\p(d-T_k d)^{-1}\p_\infty \leq 16^\tau |k|^\tau, \quad 0 \neq k \in
\Z^\tau.
$$

Applying Theorem~\ref{thm:pcor} to this distal sequence $d$, we find
that there exists $\tilde{d} \in \mathcal{L}$ and $\eps_0 > 0$ such
that for any $0 < \eps \leq \eps_0$, the discrete Schr\"odinger
operator with potential $(\frac{\tilde{d}_i}{\eps})_{i \in \Z}$ has
the pure point spectrum $\overline{\{\frac{d_i}{\eps}: i \in \Z^\tau  \}}$
and a complete set of exponentially localized eigenvectors with decay
rate $1+ \log{\frac{\eps_0}{\eps}}$. Moreover, the spectrum of this
Sch\"odinger operator as a set is $\overline{\{\frac{d_i}{\eps}: i
\in \Z^\tau \}} = [0, \frac{1}{\eps}]$ since $\overline{\{d_i: i \in
\Z^\tau  \}} = [0,1]$.

\section{Proof of Theorem~\ref{thm:main}}
We are now ready to give the proof of Theorem \ref{thm:main}. Given
a Cantor group $\Omega$ that admits a minimal translation
$T$ and satisfies the condition $\mathscr{A}$, we fix a
metric $\p \cdot \p$ compatible with the topology. We have already
seen that there exists some $f \in C(\Omega, \R)$ such that
$d=(f(T^{i}(e)))_{i \in \Z}$ is a distal sequence; compare Lemma~\ref{lem:subdistal}. Clearly,
$C(\Omega,\R)$ will induce a class of limit-periodic potentials. We
denote it by $\mathcal{B}$, and one can check that this class is a
translation invariant Banach algebra with the $\ell^\infty$-norm. By
Theorem~\ref{thm:pcor}, there exists a sufficiently small $\eps_0 > 0$
such that for $0 < \eps \leq \eps_0$, there is a sequence $\tilde{d}
\in \mathcal{B}$ with $\p\tilde{d}-d \p_{\infty} \leq
\frac{\eps^2_0}{\eps^2}$ so that the discrete Schr\"odinger operator
$$
(Hu)_i = u_{i-1} + u_{i+1} + \frac{\tilde{d_i}}{\eps}u_i,\quad i \in \Z
$$
has eigenvalues $\{\frac{d_i}{\eps}, i \in \Z\}$ and a complete set of corresponding exponentially localized eigenvectors with decay
rate $r = 1+ \log{\frac{\eps_0}{\eps}}$. There exists a sampling function
$\tilde{f} \in C(\Omega, \R)$ such that $\tilde{f}(T^i(e)) = \frac{\tilde{d_i}}{\eps}$ since $\tilde{d} \in \mathcal{B}.$

For the Schr\"odinger operator $H$ associated with potential
$\tilde{f}(T^{i}(e))$, denote its matrix representation with respect
to the standard orthonormal basis of $\ell^2(\Z)$, $\{ \delta_n
\}_{n \in \Z}$, by the same symbol. P\"oschel's theorem also implies
that there exists a unitary $V: \ell^2(\Z) \to \ell^2(\Z)$ (with
corresponding matrix denoted by the same symbol) such that
\begin{equation}\label{equ:jacob1}
H \cdot V = V \cdot D,
\end{equation}
where $D$ is a diagonal matrix with the diagonal $D_{0} =
(\frac{d_i}{\eps})_{i \in \Z}$. We write $V = (\cdots, V_{-1}, V_0,
V_1,\cdots)$ where $V_i$ is the $i$-th diagonal of $V$, and
similarly, we write $H = (\cdots,0,0,H_{-1},H_0,H_1,0,0,\cdots)$ and
$D = (\cdots,0,0,D_{0},0,0,\cdots)$.  Moreover, by
Theorem~\ref{thm:poeschel1} we have that $V \in M^{r}$, where $r >
0$ and $M^r$ is a space of matrices associated with the Banach
algebra $\mathcal{B}$ (see Section~\ref{sec:poeschel} for the
description of this space). (Note that $V \in M^{r}$ follows from
\cite[Proof of Corollary~A]{p}.) Since $V \in M^r$, we have $\p
V\p_r = \sup_{i \in \Z} \p V_i \p_{\infty} e^{|i|r} < C$ where $C$
is a constant. So $\p V_i \p_\infty < C e^{-r|i|}, \forall i \in
\Z$. Let $V^{(j)}$ be the $j$-th column of $V$, that is, $V^{(j)}$
is an eigenfunction of $H$. Since $V^{(j)}(k) = V^{(k+(j-k))}(k)$,
$V^{(j)}(k)$ is also an entry in $V_{j-k}$, and so $|V^{(j)}(k)| <
Ce^{-r|j-k|}$. $C$ is independent of $j$, so the corresponding
Schr\"odinger operator $H$ has ULE. This property is strong enough
to imply that the pure point spectrum of $H$ is independent of $\om$
\cite{j}, that is, it is phase stable. In order to see this more
explicitly, we would like to prove it in our framework, and
furthermore, show that for other $\om$, the associated Schr\"odinger
operator still has ULE with the same constant $C$. Note that the
latter property does not follow from Theorem \ref{thm:ule}.

We have the following lemma.

\begin{lemma}
Suppose we are given matrices $A,B \in \R^{\Z \times \Z}$, one of which has only finitely many non-zero diagonals. Then, we have for the $k$-th diagonal of $Z = AB$,
$$
Z_k = \sum_{l \in \Z} A_l \cdot T^{l}(B_{k-l}),
$$
where $\cdot$ is the pointwise multiplication {\rm (}i.e., $A_l \cdot
T^{l}(B_{k-l})$ is still a sequence{\rm )} and $T$ is the translation defined by
$(T(B_{k-l}))_i = (B_{k-l})_{i+1}$ for $i \in \Z$.
\end{lemma}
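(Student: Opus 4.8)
The plan is to compute the entries of $Z = AB$ directly in terms of the diagonals of $A$ and $B$ and then re-index. Throughout, for a matrix $X \in \R^{\Z \times \Z}$ I use the convention relating the $k$-th diagonal $X_k$ to the entries of $X$ by $(X_k)_i = X_{i,i+k}$ for $i \in \Z$, equivalently $X_{i,j} = (X_{j-i})_i$. I would first observe that the finiteness hypothesis — say $A$ has only finitely many nonzero diagonals, the other case being symmetric — guarantees on the one hand that the matrix product $AB$ is well defined entrywise (each row of $A$ has finite support, so $\sum_m A_{i,m} B_{m,j}$ is a finite sum), and on the other hand that the sum over $l$ in the claimed formula has only finitely many nonzero terms, so no convergence issue arises.

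Next I would fix $i, k \in \Z$ and simply write out
$$
(Z_k)_i = Z_{i,i+k} = \sum_{m \in \Z} A_{i,m}\, B_{m,i+k},
$$
and then substitute $m = i + l$, so that $l$ ranges over $\Z$ as $m$ does. Rewriting each factor via the diagonal convention gives $A_{i,m} = A_{i,i+l} = (A_l)_i$ and $B_{m,i+k} = B_{i+l,\,(i+l)+(k-l)} = (B_{k-l})_{i+l}$. By the definition of $T$ one has $(B_{k-l})_{i+l} = (T^l(B_{k-l}))_i$, whence
$$
(Z_k)_i = \sum_{l \in \Z} (A_l)_i \,(T^l(B_{k-l}))_i = \Bigl( \sum_{l \in \Z} A_l \cdot T^l(B_{k-l}) \Bigr)_i .
$$
Since this holds for every $i \in \Z$, it is precisely the asserted identity for the sequence $Z_k$.

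I do not expect a genuine obstacle here: the only point requiring care is keeping the two indexing conventions straight (matrix entries $X_{i,j}$ versus diagonals $X_k$) and applying the substitution $m = i+l$ consistently to both factors; the finiteness hypothesis trivializes any question about interchanging or truncating the sums. For completeness I would add one sentence noting that in the complementary case, where $B$ rather than $A$ has only finitely many nonzero diagonals, the sum $\sum_l A_l \cdot T^l(B_{k-l})$ is again finite because $B_{k-l} = 0$ for all but finitely many $l$, so the very same computation applies verbatim.
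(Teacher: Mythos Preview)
Your proof is correct and follows exactly the same approach as the paper: fix $i,k$, expand $z_{i,i+k} = \sum_t a_{i,t} b_{t,i+k}$, substitute $t = i+l$, and read off the diagonal entries. Your version is in fact more complete, since you spell out the role of the finiteness hypothesis and the symmetric case, which the paper's proof leaves implicit.
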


\begin{proof}
Since for $i,k \in \Z$, we have
\begin{align*}
z_{i,i+k} & = \sum_{t \in \Z} a_{i,t}b_{t,i+k} \\
&= \sum_{l \in \Z} a_{i,i+l}b_{i+l,i+k} \\
&= \sum_{l \in \Z}a_{i,i+l}b_{i+l,i+l+k-l},
\end{align*}
the lemma follows.
\end{proof}

Now consider a given $\om \in \Omega$. By Proposition~\ref{prop:hull} we have $(\tilde{f}(T^{i}(\om)))_{i \in \Z} \in \mathrm{hull}((\tilde{f}(T^{i}(e)))_{i \in \Z})$. If $\omega$ is in the orbit of $e$, that is, $\om = T^{t}(e)$ for some $t \in \Z$, ULE with the same constants and eigenvalues follows from unitary operator equivalence directly. However, we write this out in detail so that we see clearly what happens in the case where $\om$ can only be approximated by elements of the form $T^{t}(e)$.

By the previous lemma, \eqref{equ:jacob1} is equivalent to the following form:
$$
\forall k\in \Z : \qquad \sum_{l \in \Z} H_l \cdot T^l (V_{k-l}) = \sum_{l \in \Z} V_l \cdot T^l(D_{k-l}).
$$
Since $D_j = 0$ for $j \not= 0$ and $H_{\pm 1}$ are both constant equal to one, this simplifies as follows,
$$
\forall k\in \Z : \qquad T^{-1} V_{k+1} + H_0 \cdot V_{k} + T V_{k-1} = V_k \cdot T^k(D_0).
$$

If the potential is replaced by $\tilde{f}(T^{i+t} (e))$, with the matrix
$\tilde{H} = (\cdots,0,0,\tilde{H}_{-1},\tilde{H}_{0},\tilde{H}_{1},0,0,\cdots)$
such that $\tilde{H}_j(i)=H_{j}(i+t), j \in \{-1,0,1\}$, we still have
$$
\forall k\in \Z : \qquad T^{-1} \tilde V_{k+1} + \tilde H_0 \cdot \tilde V_{k} + T \tilde V_{k-1} = \tilde V_k \cdot T^k(\tilde D_0),
$$
where $\tilde{V}_k(i) = V_k(i+t),\ k \in \Z $ and $\tilde{D}_0(i) = D_0(i+t)$. Reversing the steps above, this means that
$$
\tilde H \cdot \tilde V = \tilde V \cdot \tilde D.
$$
We can conclude that $\tilde{H}$ has the pure point spectrum $\overline{\{\frac{d_{i+t}}{\eps} : i \in \Z \}} = \overline{\{\frac{d_i}{\eps}:i\in \Z \}}$. Moreover, $\tilde{V} = (\cdots, \tilde{V}_{-1}, \tilde{V}_{0}, \tilde{V}_{1}, \cdots)$ is the eigenfunction matrix of $\tilde{H}$, and for any $i,k \in \Z$, $|\tilde{V}_k (i)| = |V_k (i+t)| \leq Ce^{-r|k|}$. So for the eigenfunction $\tilde{V}^{(j)}$ of $\tilde{H}$, we still have $|\tilde{V}^{(j)}(i)| < Ce^{-r|j-i|}$, and hence ULE with the same constants follows.

If $\lim_{m \to \infty} T^{t_m}(e) = \om$, that is, $\tilde{f}(T^{i}(\om)) = \lim_{m \to \infty}\tilde{f}(T^{i+t_m}(e))$, then for $\tilde{f}(T^{i+t_m}(e))$, we have already seen that
\begin{equation}\label{equ:shift}
\tilde{H}^{(m)}\cdot \tilde{V}^{(m)} = \tilde{V}^{(m)} \cdot \tilde{D}^{(m)}.
\end{equation}
Let $\tilde{V}^{(m)}_k$ be the $k$-th diagonal of $\tilde{V}^{(m)}$, so that $\tilde{V}^{(m)}_k(i) = V_k(i+t_m)$. There exists some $\tilde{f}_k \in C(\Omega, \R)$ such that $\tilde{V}^{(m)}_k(i) = V_{k}(i+t_m) = \tilde{f}_k(T^{i + t_m}(e))$. So $\lim_{m \to \infty} \tilde{V}^{(m)}_k(i)  = \lim_{m \to \infty} \tilde{f}_k (T^{i+t_m}(e)) = \tilde{f}_k (T^{i}(\om))$, and we denote $\tilde{f}_k (T^{i}(\om))$ by $\tilde{V}^{(\infty)}_k (i)$. Similarly, $\lim_{m \to \infty} \tilde{D}^{(m)}$ exists and $\tilde{D}^{(\infty)}_0(i) = f(T^i(\om))$, where $\tilde{D}^{(\infty)}_0$ is the 0-th diagonal of $\tilde{D}^{(\infty)}$. Thus, as we let $m \to \infty$, \eqref{equ:shift} takes the following form:
\begin{equation}\label{equ:limit}
\tilde{H}^{(\infty)} \cdot \tilde{V}^{(\infty)} = \tilde{V}^{(\infty)} \cdot \tilde{D}^{(\infty)},
\end{equation}
where $\tilde{H}^{(\infty)}$ is (the matrix representation of) the Schr\"odinger operator with potential $\tilde{f}(T^i(\om))$. Equation~\eqref{equ:limit} implies that $\tilde{H}^{(\infty)}$ has
the pure point spectrum  $\overline{\{\frac{d_i}{\eps} : i \in \Z\}}$, and its eigenfunctions are uniformly localized since $|(\tilde{V}^{(\infty)})^{(j)}(k)| < Ce^{-r|j-k|}$ for any $j, k \in \Z$, where $(\tilde{V}^{(\infty)})^{(j)}$ is the $j$-th column of $\tilde{V}^{(\infty)}$. This completes the proof of Theorem~\ref{thm:main}. \hfill \qedsymbol

\section{Open Problems}

We conclude this paper with a number of open problems concerning the spectral properties of limit-periodic Schr\"odinger operators that we regard as interesting.

Given the results of \cite{dg1,dg2}, it would be desirable to complete the topological picture. Thus, given a minimal translation $T$ of a Cantor group $\Omega$, consider for $f \in C(\Omega,\R)$ and $\omega \in \Omega$ the spectral type of the associated Schr\"odinger operator $H_\omega$ with potential given by $V_\omega(n) = f(T^n (\omega))$.

\medskip

\textit{Problem 1.} Is it true that for $f$ from a suitable dense subset of $C(\Omega,\R)$, $H_\omega$ has pure point spectrum for (Haar-) almost every $\omega \in \Omega$?

\medskip

We already know that for generic $f \in C(\Omega,\R)$, $H_\omega$ has purely singular continuous spectrum for every $\omega \in \Omega$, and also that for $f$ from a suitable dense subset of $C(\Omega,\R)$, $H_\omega$ has purely absolutely continuous spectrum for every $\omega \in \Omega$. Thus, an affirmative answer to Problem~1 would clarify the effect of the choice of $f$ on the spectral type. Since the methods of P\"oschel are essentially restricted to large potentials, one should not expect them to yield an answer to Problem~1 and one should in fact pursue methods involving some randomness aspect.

Note, however, the different quantifier on $\omega$ in Problem~1, compared to the results just quoted. In this paper, we exhibit $(\Omega,T,f)$ for which $H_\omega$ has pure point spectrum for every $\omega \in \Omega$. From this perspective, the following problem arises naturally:

\medskip

\textit{Problem 2.} Is the spectral type of $H_\omega$ always the same for every $\omega \in \Omega$?

\medskip

For quasi-periodic potentials, this is known not to be the case (cf.~\cite{js}). However, the mutual approximation by translates for two given elements in the hull is stronger in the limit-periodic case than in the quasi-periodic case, so it is not clear if similar counterexamples to uniform spectral types exist in the limit-periodic world.

Another related problem is the following:

\medskip

\textit{Problem 3.} Is the spectral type of $H_\omega$ always pure?

\medskip

Again, in the quasi-periodic world, this is known not to be the case: there are examples that have both absolutely continuous spectrum and point spectrum (cf.~\cite{b1,b2}).

Returning to the issue of point spectrum, one interesting aspect of the result stated (in the continuum case) by Molchanov and Chulaevsky in \cite{mc} is the coexistence of pure point spectrum with the absence of non-uniform hyperbolicity. That is, in their examples, the Lyapunov exponent vanishes on the spectrum and yet the spectral measures are pure point. This is the only known example of this kind and it would therefore be of interest to have a complete published proof of a result exhibiting this phenomenon. Especially since our study is carried out in a different framework, we ask within this framework the following question:

\medskip

\textit{Problem 4.} For how many $f \in C(\Omega,\R)$ does the Lyapunov exponent vanish throughout the spectrum and yet $H_\omega$ has pure point spectrum for (almost) every $\omega \in \Omega$?

\medskip

Given the existing ideas, it is conceivable that Problems~1 and 4 are closely related and may be answered by the same construction. If this is the case, it will then still be of interest to show for a dense set of $f$'s that there is almost sure pure point spectrum with \emph{positive} Lyapunov exponents.

\end{document}